\documentclass[10pt]{amsart}

\usepackage{graphicx} % Required for inserting images
\usepackage[utf8]{inputenc}
\usepackage[english]{babel}
\usepackage{braket}
\makeatletter
\def\l@subsection{\@tocline{2}{0pt}{2.5pc}{5pc}{}}
\makeatother
\usepackage{amssymb}
\usepackage{amsmath}
\usepackage{amsthm}
\usepackage{upgreek}
\usepackage{multicol}
\usepackage{enumerate}
\usepackage{float}
\usepackage{mathrsfs} %para letras cursivas
\usepackage[dvipsnames]{xcolor}
\usepackage{comment}
\usepackage{bbm}
\usepackage{amsthm} %para theoremstyle
\theoremstyle{definition}
\usepackage{amsmath,environ}
\NewEnviron{equsplit}
{
	\begin{equation*}\begin{split}
			\BODY
	\end{split}\end{equation*}
}
\newcommand{\R}{\mathbb{R}}

\newcommand{\C}{\mathbb{C}}
\newcommand{\T}{\mathbb{T}}

\newcommand{\G}{\mathcal{G}}

\usepackage{mathtools}
\renewenvironment{cases}{%
	\begin{dcases}%
	}{%
	\end{dcases}%\kern-\nulldelimiterspace%
}
\usepackage{aliascnt}
\usepackage[hidelinks]{hyperref}

\numberwithin{defi}{section}

\newcommand{\newautotheorem}[2]{%
	\newaliascnt{#1}{defi}%
	\newtheorem{#1}[#1]{#2}%
	\aliascntresetthe{#1}%
	\expandafter\def\csname #1autorefname\endcsname{#2}%
}

\newautotheorem{theorem}{Theorem}
\newautotheorem{proposition}{Proposition}
\newautotheorem{corollary}{Corollary}
\newautotheorem{definition}{Definition}
\newautotheorem{lemma}{Lemma}
\newautotheorem{example}{Example}
\newautotheorem{remark}{Remark}
\newautotheorem{nota}{Notation}

\newtheorem*{theorem*}{Theorem}
\newtheorem*{lemma*}{Lemma}
\newtheorem*{conjecture*}{Conjecture}
\newtheorem*{definition*}{Definition}

\newtheorem{mainthm}{Theorem}

\newcommand{\luno}{\ell^1(G\curvearrowright X)}

\renewcommand{\G}{\mathcal{G}}

\newcommand{\lone}[2]{\ell^1(#1\curvearrowright#2)}
\newcommand{\unit}[1]{\mathcal{U}(#1)}
\newcommand{\core}[1]{\text{core}(#1)}
\newcommand{\lpxg}[0]{\ell^p(X\times G)}
\newcommand{\Lp}[2]{F_\lambda^p(#1\curvearrowright#2)}
\renewcommand{\Re}[1]{\text{Re}(#1)}
\renewcommand{\Im}[1]{\text{Im}(#1)}
\newcommand{\action}[0]{G\curvearrowright X}
\newcommand{\actionof}[2]{#1\curvearrowright #2}
\newcommand{\Linfty}{L^\infty(X,\mu)}
\newcommand{\lunom}{\ell^1(G\curvearrowright (X,\mu))}

\usepackage[dvipsnames]{xcolor}
\usepackage[shortlabels]{enumitem}
\usepackage{tikz-cd}

\title{Isomorphisms of $\boldsymbol{\ell^1}$-crossed products of topological systems}
\date{\today}

\dedicatory{In memory of Fernando Abadie}

\author[Eusebio Gardella]{Eusebio Gardella}
\address{Eusebio Gardella,
	Department of Mathematical Sciences, Chalmers University of
	Technology and University of Gothenburg, Gothenburg SE-412 96, Sweden.}
\email{gardella@chalmers.se}
\urladdr{www.math.chalmers.se/~gardella}

\author[Tabaré Roland]{Tabaré Roland}
\address{Tabaré Roland, Instituto de Matemática y Estadística, Facultad de Ingeniería, Universidad de la República, Uruguay}
\email{tabarer@fing.edu.uy}

\begin{document}
	
	\begin{abstract}
		We give a complete description of the isometric isomorphisms between the
		$\ell^1$-crossed products of actions of discrete groups on
		nonempty compact Hausdorff spaces. In particular, two such
		crossed products are isometrically isomorphic as Banach
		algebras if and only if the actions are topologically isomorphic.
		It follows that $\lone{G}{X}$ remembers not just $X$ but even the acting group and the action, and is
		not determined by the transformation groupoid $G\ltimes X$ alone.
		The proof recovers $C(X)$ from the hermitian elements and uses
		the geometry of $L^p$-spaces to identify the invertible
		isometries. We also obtain analogous results for measurable
		actions.
	\end{abstract}

	\maketitle
	
%	\tableofcontents
	
\section{Introduction}

How much of a group action can be recovered from its crossed product?
The answer depends on the norm used to complete the algebraic crossed
product.  In this paper we consider an action $G\curvearrowright X$ of
a discrete group on a nonempty compact Hausdorff space and the Banach
algebra\footnote{There is also a canonical adjoint defined on $\lone{G}{X}$, which we however do not need for our analysis and therefore also do not introduce or use here.} $\lone{G}{X}$, obtained by completing the algebraic crossed
product in the norm
\[
\Big\|\sum_{s\in G} f_s\delta_s\Big\|_1
=\sum_{s\in G}\|f_s\|_\infty.
\]
Our main result gives a particularly clean answer, which we proceed to motivate.

There are reasons to expect the orbit structure of $G\curvearrowright X$ to be encoded in the crossed product, at least to some (weak) extent. Indeed, for minimal
$\mathbb Z$-actions on Cantor sets, Giordano, Putnam and Skau showed
that the associated $C^*$-crossed products are classified by strong
orbit equivalence; see
\cite{GPS}.  For topologically free actions, an isomorphism of reduced
$C^*$-crossed products that carries $C(X)$ onto $C(Y)$ corresponds,
under the usual hypotheses, to continuous orbit equivalence
\cite{Li}.  The analogous $L^p$-operator algebra crossed products,
for $p\neq 2$, detect their canonical copies of $C(X)$ intrinsically,
but their isometric isomorphism problem for topologically free actions
is again governed by continuous orbit equivalence; see
\cite{rigidityLp}.  These results make it natural to ask whether an
$\ell^1$-crossed product sees some form of
orbit equivalence, or a completely different set of data.

The group algebra case offers a clue, but not a complete answer.
When $X$ consists of one point, $\lone{G}{X}$ is $\ell^1(G)$, and
Wendel's theorem says that its isometric isomorphism class determines
$G$; see \cite{wendel}.  It is less clear what happens when the action has
nontrivial orbits: a crossed product combines the group with the
functions on $X$, and an isomorphism need not preserve the apparent
copies of either.  Earlier work on $\ell^1$-crossed products has
shown that their Banach algebra structure retains substantial
dynamical information.  For $\mathbb Z$-actions, de Jeu, Svensson
and Tomiyama proved, for example, that freeness can be characterized
by self-adjointness of closed ideals \cite{1}; see also
\cite{2,3,4,5} and the extension to more general groups in
\cite{preprint}.  

The answer is unexpectedly rigid: \emph{the isometric isomorphism
class of $\lone{G}{X}$ determines the entire action
$G\curvearrowright X$}. It remembers not only the orbits, but the
acting group and how that group acts on $X$, and it holds without any
freeness, minimality, amenability, or countability assumptions.

\begin{mainthm}[see \autoref{CorEquiv}]\label{mainThmA}
	Let $G\curvearrowright X$ and $H\curvearrowright Y$ be actions
	of discrete groups on nonempty compact Hausdorff spaces.
	Then $\lone{G}{X}$ and $\lone{H}{Y}$ are isometrically
	isomorphic as Banach algebras if and only if there are a group
	isomorphism $\psi\colon G\to H$ and a homeomorphism
	$\theta\colon X\to Y$ such that
	\[
	\theta(sx)=\psi(s)\theta(x)\]
for all $s\in G$ and $x\in X$.
\end{mainthm}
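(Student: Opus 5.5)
The easy direction comes first. Given $\psi$ and $\theta$, the map $\sum_s f_s\delta_s\mapsto \sum_s (f_s\circ\theta^{-1})\delta_{\psi(s)}$ is defined on the algebraic crossed products. It is multiplicative because $\theta(sx)=\psi(s)\theta(x)$ makes it intertwine the two actions on functions. It is isometric for $\|\cdot\|_1$, since composition with a homeomorphism preserves sup norms, so it extends to an isometric isomorphism. The substance is therefore the converse. Following the abstract, the plan is to recover $C(X)$ intrinsically as the set of hermitian elements, and then recover $G$ from the invertible isometries.

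\textbf{Step 1 (hermitian elements).} Recall that $a$ in a unital Banach algebra $A$ is hermitian if $\|e^{ita}\|=1$ for all $t\in\R$, or equivalently if its numerical range is real. I will show that the hermitian elements of $\lone{G}{X}$ are exactly the real-valued functions $C(X,\R)\delta_e$. One inclusion is clear: for real $f$, $e^{itf}$ is a unimodular function, so it has norm $1$. For the converse, let $a=\sum_s f_s\delta_s$ be hermitian. The algebra acts contractively on $\ell^1(X\times G)$, or on $\ell^p$-spaces over orbits, by the regular representation. Hermitian elements are sent to hermitian operators, and hermitian operators on $\ell^p$ with $p\neq2$ (for instance $p=1$) are known (Lumer) to be real multiplication operators. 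Evaluating at the point masses $\delta_{(x,e)}$ for all $x$ then forces $f_s(x)=0$ for $s\neq e$, and forces $f_e$ to be real. Since isometric isomorphisms preserve hermitian elements, any $\varphi\colon\lone{G}{X}\to\lone{H}{Y}$ maps $C(X,\R)$ onto $C(Y,\R)$. Hence it maps $C(X)=C(X,\R)+iC(X,\R)$ onto $C(Y)$ as an algebra isomorphism, which by Gelfand duality is $f\mapsto f\circ\theta^{-1}$ for a homeomorphism $\theta\colon X\to Y$.

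\textbf{Step 2 (invertible isometries).} Next I will determine the elements $u$ with $\|u\|=\|u^{-1}\|=1$. The claim is that they are exactly $\lambda\delta_s$ with $\lambda\in C(X,\mathbb T)$ and $s\in G$. This is the main obstacle, and where the geometry of $L^p$ enters. My first approach: left multiplication by $u$ is an invertible isometry of the Banach space $\lone{G}{X}$, which sits isometrically as an $\ell^1$-sum over $G$ of copies of $C(X)$. Represent $u$ on $\ell^1(G\cdot x)$ for each point $x$ (or on $\ell^p$ over the orbit). There an invertible isometry must be a weighted composition, by Banach--Lamperti, so it sends each $\delta_{(x,e)}$ to a unimodular multiple of a single basis vector. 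This gives that for each $x$ exactly one $s$ has $f_s(x)\neq0$, and that $|f_s(x)|=1$. Then $\{x: f_s(x)\neq 0\}$ is clopen and the sets for different $s$ are disjoint, so the sum is finite by compactness. Thus $u$ decomposes along a finite clopen partition. To rule out nontrivial partitions, pass modulo the normal subgroup $\mathcal U(C(X))$ of unimodular functions. The group $G$ should then be recovered as the quotient of the invertible isometries by $\mathcal U(C(X))$, which equals $\{\lambda\delta_s\}/\mathcal U(C(X))\cong G$. I expect genuine partitioned elements $\sum_i 1_{X_i}\delta_{s_i}$ to also be isometries when the $s_iX_i$ partition $X$. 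If so, I will instead characterize the $\lambda\delta_s$ intrinsically, for instance as the invertible isometries $u$ with $uC(X)u^{-1}=C(X)$ whose induced automorphism of $C(X)$ commutes appropriately. A cleaner criterion is that $\delta_s$ is characterized, up to $\mathcal U(C(X))$, by $\|u-\mu v\|$ computations. I would try the criterion ``$u\,1_U\,u^{-1}$ and $u$ behave uniformly''. In any case the key identity $\delta_s f\delta_s^{-1}=f\circ s^{-1}$ is intrinsic, and I will use it.

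\textbf{Step 3 (conclusion).} Since $\varphi$ preserves the intrinsic class from Step 2 and preserves $C(X)$, it induces a group isomorphism $\psi\colon G\to H$ with $\varphi(\delta_s)=\lambda_s\delta_{\psi(s)}$. Applying $\varphi$ to $\delta_s f\delta_s^{-1}=f\circ s^{-1}$ and conjugating gives $f\circ s^{-1}\circ\theta^{-1}=f\circ\theta^{-1}\circ\psi(s)^{-1}$ for all $f$, because the unimodular $\lambda_s$ commutes with $C(Y)$. Therefore $\theta(sx)=\psi(s)\theta(x)$.
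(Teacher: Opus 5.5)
\textbf{Verdict: gap in Step 2.} Your easy direction and Step 3 are fine. Step 1 is a valid alternative to the paper's argument. The paper computes the one-sided derivatives of $t\mapsto\|1+ita\|$ directly from the $\ell^1$-norm; you instead push $a$ to a hermitian operator on $\ell^1(X\times G)$ and use that such operators are real diagonal.

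The gap is in Step 2, which is exactly the step you flag as the main obstacle. Your representation argument does give
\[u=\sum_{i=1}^k\lambda\mathbbm{1}_{X_i}\delta_{s_i},\]
with distinct $s_1,\dots,s_k\in G$, $\lambda\in C(X,\mathbb{T})$, and $X_1,\dots,X_k$ nonempty clopen sets partitioning $X$. This requires using the orbit of $(x,e)$ in $X\times G$, which is a copy of $\ell^p(G)$, and not $\ell^1(G\cdot x)$ with $G\cdot x\subseteq X$. On the latter, stabilizers merge distinct coefficients; for a trivial action it just sends $\sum_s f_s\delta_s$ to $\sum_s f_s(x)$.

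You then conjecture that such elements with $k>1$ are invertible isometries, and this is false. The missing idea is a one-line computation with the norm itself:
\[\Bigl\|\sum_{i=1}^k\lambda\mathbbm{1}_{X_i}\delta_{s_i}\Bigr\|=\sum_{i=1}^k\|\lambda\mathbbm{1}_{X_i}\|_\infty=k.\]
So $\|u\|=1$ forces $k=1$, that is, $u=\lambda\delta_s$. This is exactly how the paper finishes \autoref{unitariesOnl1}. It is also precisely where the $\ell^1$-norm differs from the $L^p$-operator norm. The partitioned elements you have in mind are genuine invertible isometries of $F_\lambda^p(G\curvearrowright X)$. In $\ell^1(G\curvearrowright X)$, however, they and their inverses have norm $k$.

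The fallback you sketch does not close the gap. A criterion such as $uC(X)u^{-1}=C(X)$ holds for every invertible element of the partitioned form, since conjugation by it is composition with a piecewise-defined homeomorphism. So it cannot separate these elements from the $\lambda\delta_s$. For instance, take the trivial action on $X=X_1\sqcup X_2$ with $X_1,X_2$ clopen and $s_1\neq s_2$. Then $\mathbbm{1}_{X_1}\delta_{s_1}+\mathbbm{1}_{X_2}\delta_{s_2}$ is invertible and induces the identity on $C(X)$, just as every $\delta_s$ does. Your other suggestions (``$\|u-\mu v\|$ computations'', ``behave uniformly'') are not made precise.

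Step 3 needs $\varphi(\delta_s)\in C(Y,\mathbb{T})\delta_{\psi(s)}$ for a group isomorphism $\psi$. That rests on the description $\mathcal{U}(\ell^1(G\curvearrowright X))=\{\lambda\delta_s\colon \lambda\in C(X,\mathbb{T}),\ s\in G\}$, so the proof as proposed is incomplete. Once you insert the norm count above, Step 2 is settled and the rest of your argument coincides with the paper's.
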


One remarkable consequence is that the isometric isomorphism class of
$\lone{G}{X}$ does not depend only on the transformation groupoid
$G\ltimes X$.  Indeed, let either $\mathbb Z/4\mathbb Z$ or
$(\mathbb Z/2\mathbb Z)^2$ act on itself by translation.  Both actions
are free and transitive, so both transformation groupoids are
isomorphic to the pair groupoid on four points. Their
$\ell^1$-crossed products, however, cannot be isometrically
isomorphic by \autoref{mainThmA}. Thus even the acting group,
which the pair groupoid has forgotten, survives in the norm of the
crossed product.

This also distinguishes the algebra studied here from the
$I$-norm completion of the compactly supported functions on
$G\ltimes X$ \cite{Renault80,austadOrtega}, despite the strong similarities between their constructions and the definitions of the norms. 
For the norm used here, one takes the supremum norm of each coefficient
$f_s$ and then sums over $s$. The groupoid $I$-norm instead sums along
each source or range fiber before taking the supremum over $x$.
This difference is what allows $\lone{G}{X}$ to retain information
that the transformation groupoid loses.

Our methods allow us to describe all isometric isomorphisms of $\ell^1(G\curvearrowright X)$:
in addition to an isomorphism $(\theta,\psi)$ of the actions, one may
multiply the image of each $\delta_s$ by a circle-valued continuous
function, and these functions satisfy a $1$-cocycle identity; see
\autoref{mainThm}. Although we do not formalize this, our arguments show that the isometric automorphism group 
of $\ell^1(G\curvearrowright X)$ is naturally isomorphic to the semidirect product of the 1-cocycle group $Z^1(G\curvearrowright X)$ and the automorphism group of $G\curvearrowright X$; in symbols 
\[\mathrm{Aut}(\ell^1(G\curvearrowright X))\cong Z^1(G\curvearrowright X)\rtimes \mathrm{Aut}(G\curvearrowright X).\] 
%The action of $(\theta,\psi)\in\operatorname{Aut}(\sigma)$ on $\omega\in Z^1(\sigma)$ is given by $((\theta,\psi)\cdot \omega)_s=\omega_{\psi^{-1}(s)}\circ\theta^{-1}$ for all $s\in G$. 

The proof uses an unexpected bridge to $L^p$-operator algebras.
The regular representation of $\lone{G}{X}$ acts on
$\ell^p(X\times G)$ for $p\in(1,\infty)\setminus\{2\}$, and
Lamperti's description of the surjective isometries of $\ell^p$
(see \cite{Lamperti58}) then gives a constraint on the invertible isometries of our
Banach algebra. This constraint implies that in the $L^p$ 
representation, an isometry can at
first sight appear to use different group elements on different pieces of
$X$. However, the $\ell^1$-norm rules out this possibility for an invertible
isometry of $\lone{G}{X}$: every such element has the form
$f\delta_s$, where $s\in G$ and $f\in C(X,\mathbb T)$.

The other ingredient is intrinsic to the Banach algebra.  Its
hermitian elements are exactly $C(X,\mathbb R)$, so its canonical
copy of $C(X)$ can be recovered without being specified in advance.
The quotient of the group of invertible isometries by
$C(X,\mathbb T)$ is then shown the be isomorphic to $G$, and conjugation on $C(X)$ recovers
the action up to topological isomorphism. The argument therefore combines the geometry of
$L^p$-spaces with a feature specific to the $\ell^1$-norm.
It also gives a proof of Wendel's theorem for discrete groups by
taking $X$ to be a point; see \autoref{rmkWendel}.

The topological result has a measurable counterpart.  For an action
of a countable discrete group on a standard measure space, we form
the corresponding crossed product with coefficients in
$L^\infty(X,\mu)$.  Passing to the Gelfand spectrum of
$L^\infty(X,\mu)$ allows us to apply \autoref{mainThmA}.  In the
probability-measure-preserving case, the canonical trace records the
measure as well as the action.

\begin{mainthm}[see \autoref{mainThmMeasurable} and
	\autoref{thmIsomMeasure}]\label{mainThmB}
	Let $G\curvearrowright (X,\mu)$ and
	$H\curvearrowright (Y,\nu)$ be measure-class-preserving actions
	of countable discrete groups on standard measure spaces.
	Then:
	\begin{enumerate}
		\item The actions are measurably isomorphic if and only if
		$\lone{G}{(X,\mu)}$ and $\lone{H}{(Y,\nu)}$ are isometrically
		isomorphic as Banach algebras.
		\item If the spaces are standard probability spaces and
		both actions preserve their measures, then the actions
		are measure-isomorphic if and only if their crossed
		products are isometrically isomorphic by an isomorphism
		intertwining the canonical traces $T_\mu$ and $T_\nu$.
	\end{enumerate}
\end{mainthm}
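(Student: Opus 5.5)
The plan is to reduce Theorem B to \autoref{mainThmA} by passing to the Gelfand spectrum, and then to add a separate trace argument for part (2).

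\emph{Part (1).} Let $Z=\widehat{L^\infty(X,\mu)}$ be the Gelfand spectrum. It is a nonempty compact (hyperstonean) Hausdorff space, and $L^\infty(X,\mu)\cong C(Z)$. The action $G\curvearrowright(X,\mu)$ is measure-class-preserving, so each $s\in G$ induces an automorphism $f\mapsto f\circ s^{-1}$ of $L^\infty(X,\mu)$. By Gelfand duality this gives an action $G\curvearrowright Z$ by homeomorphisms. By construction the norm on $\lone{G}{(X,\mu)}$ is $\sum_s\|f_s\|_\infty$, so the identification $L^\infty\cong C(Z)$ extends to an isometric isomorphism $\lone{G}{(X,\mu)}\cong\lone{G}{Z}$, and similarly for $H\curvearrowright(Y,\nu)$.

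\emph{Forward direction.} A measurable isomorphism consists of a group isomorphism $\psi$ and a measure-class isomorphism $\theta$ with $\theta(sx)=\psi(s)\theta(x)$ almost everywhere. It induces $f_s\delta_s\mapsto (f_s\circ\theta^{-1})\delta_{\psi(s)}$, which is visibly isometric and multiplicative on finite sums and therefore extends to the completion.

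\emph{Converse.} \autoref{mainThmA} gives $\psi\colon G\cong H$ and an equivariant homeomorphism $Z_X\to Z_Y$. Gelfand duality turns this into an equivariant $*$-isomorphism $L^\infty(X,\mu)\to L^\infty(Y,\nu)$. Since the spaces are standard, every such isomorphism of measure algebras is implemented by a point map $\theta\colon X\to Y$ defined off null sets, which is a measure-class isomorphism (the von Neumann/Mackey point realization). Equivariance of the algebra map gives $\theta(sx)=\psi(s)\theta(x)$ almost everywhere for each $s$. Countability of $G$ lets us discard a single conull invariant set and obtain a genuine measurable isomorphism. The main technical point in this part is exactly this point realization, which is where standardness and countability are used.

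\emph{Part (2).} The canonical trace is $T_\mu(\sum f_s\delta_s)=\int f_e\,d\mu$. For the forward direction, the induced isomorphism above preserves traces when $\theta_*\mu=\nu$.

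For the converse, the main obstacle is that the isomorphism $\Phi$ produced by \autoref{mainThmA} is not arbitrary. By \autoref{mainThm} (transported through Gelfand duality), it has the form $\Phi(f\delta_s)=\omega_{\psi(s)}\,(f\circ\theta^{-1})\,\delta_{\psi(s)}$ for a cocycle $\omega$. In particular, $\Phi$ restricts to $f\mapsto f\circ\theta^{-1}$ on the coefficient algebra. Applying $T_\nu\circ\Phi=T_\mu$ to $f\in L^\infty(X,\mu)$ (the term $s=e$) gives $\int f\circ\theta^{-1}\,d\nu=\int f\,d\mu$, that is, $\theta_*\mu=\nu$. If the paper's description of isometric isomorphisms is only available topologically, I would first show that $\Phi$ maps $L^\infty(X,\mu)$ onto $L^\infty(Y,\nu)$ because these are the hermitian elements (plus $i$ times them), as stated in the introduction. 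That is enough, since the restriction is then a $*$-isomorphism of $L^\infty$-algebras implemented by $\theta$, and the trace computation above applies.
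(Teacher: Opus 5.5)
Your proposal is correct and follows essentially the paper's route: identify $\ell^1(G\curvearrowright (X,\mu))$ isometrically with the $\ell^1$-crossed product over the Gelfand spectrum of $L^\infty(X,\mu)$, use \autoref{mainThm} to see that $\Phi$ restricts to a $\psi$-equivariant isomorphism $L^\infty(X,\mu)\to L^\infty(Y,\nu)$, realize it by a point map using standardness and countability of the groups, and obtain $\theta_*\mu=\nu$ by applying $T_\nu\circ\Phi=T_\mu$ to the $e$-coefficients. The one caveat concerns your fallback: knowing only that $\Phi$ maps $L^\infty(X,\mu)$ onto $L^\infty(Y,\nu)$ (via hermitian elements) gives a measure-preserving point map but not its equivariance, which needs the description $\Phi(\delta_s)=\omega_s\delta_{\psi(s)}$ of the invertible isometries packaged in \autoref{mainThm}, so your main route is the one that actually closes the argument.
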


The paper is organized as follows.  In Section 2 we recall the
Banach algebra notions used to recover $C(X)$.  In Section 3 we
introduce the crossed products and the $L^p$ representation.
Section 4 proves the description of isometric isomorphisms and
\autoref{mainThmA}.  Section 5 establishes the measurable
versions in \autoref{mainThmB}.

	\subsection*{AI statement}
	All ideas and proofs in this work were obtained by the authors without the aid of AI/LLMs. A relatively final version of the manuscript was proofread by ChatGPT, who suggested a few minor clarifications to our arguments. AI was not used for any other purposes.

    \subsection*{Funding} The first named author was supported by the Swedish Research Council through the VR grant 2025-05763, ``Symmetries of simple $C^*$-algebras''.
    The second author was supported by a Comisión Académica de Posgrado scholarship. 
	
	\subsection*{Acknowledgments}
	This work was initiated while the second author was visiting the first one as part of his Masters studies in 2024. He would like to thank the Mathematics Department at the University of Gothenburg/Chalmers for the hospitality during his visit and the nice working environment.
    The second author thanks his supervisor Matilde Mart\'inez for the constant support during the development of this work. 

    \vspace{.2cm}
    
    Both authors are deeply grateful to Fernando Abadie, whose memory this paper is dedicated to. Fernando was the bachelor’s and master’s advisor of both authors, at different times, and the second author’s master’s project, from which this work grew, began under his supervision. His mathematical guidance, generosity, and care for his students have a lasting influence on both of us. \emph{Gracias por todo, Fer.}

	\section{Hermitian elements and invertible isometries}
	In this section, we introduce hermitian elements and invertible isometries of unital Banach algebras. 
	We also recall the definition of the \textit{core} of a unital Banach algebra from \cite{rigidityLp} and note that, under some assumptions on the algebra, it is a commutative $C^*$-algebra.

	\begin{definition}\label{defHerm}
		Let $A$ be a unital Banach algebra. 
		We say that an element $a\in A$ is \textit{hermitian} if $\lVert e^{iat}\rVert = 1$ for all $t\in \R$. 
		We denote the set of hermitian elements of $A$ as $A_\mathsf{h}$.
	\end{definition}

	The hermitian elements of a unital Banach algebra form a real Banach space and, if $A$ is a $C^*$-algebra, then $A_\mathsf{h}$ consists of the self-adjoint elements of $A$. 
We isolate the following particular case for later use.

    \begin{remark}
        If $X$ is a compact, Hausdorff space, then $C(X)_\mathsf{h} = C(X,\R).$
    \end{remark}
	
		The Vidav-Palmer theorem (see \cite[Theorem 6.9]{bonsallDuncan}) gives an abstract characterization of $C^*$-algebras using hermitian elements: a unital Banach algebra $A$ is a $C^*$-algebra if and only if $A=A_\mathsf{h} + iA_\mathsf{h}$. 
	This result suggests a way of finding a $C^*$-algebra inside a unital Banach algebra $A$. 
	Explicitly, given a unital Banach algebra $A$, we want $A_\mathsf{h}+i A_\mathsf{h}$ to be a $C^*$-subalgebra of $A$. 
	One can naturally define an involution on $A_\mathsf{h}+iA_\mathsf{h}$, but it is not always true that $A_\mathsf{h}+iA_\mathsf{h}$ is closed under multiplication.  
	This motivates the following definition.
	
	\begin{definition}\label{defCore}
		Let $A$ be a unital Banach algebra such that $A_\mathsf{h}$ is closed under multiplication. 
		We define the \textit{core of $A$} as $\core{A} = A_\mathsf{h} + i A_\mathsf{h}$.
	\end{definition}

	By \cite[Proposition 2.3]{rigidityLp}, $A_\mathsf{h}+iA_\mathsf{h}$ is an algebra whenever $A_\mathsf{h}$ is closed under multiplication, and in this case $\core{A}$ is a commutative, unital $C^*$-algebra.

	Let $A$ be a unital Banach algebra. 
    Denote the (Banach space) dual of $A$ by $A'$. 
	Following \cite[Definition 2.1 and Lemma 2.2]{bonsallDuncan}, we define the \textit{numerical range} $V(a)$ of $a\in A$ as 
	$$V(a) = \{f(a) \colon f\in A', f(1) = 1 = \lVert f\rVert\}\subseteq \C.$$ 
	By \cite[Page 15]{bonsallDuncan}, we have $V(z_1 a +z_2) = z_1 V(a) + z_2$ for all $a\in A$ and all $z_1,z_2\in \C$. 
	With the numerical range at our disposal, we can state another characterization of hermitian elements, from \cite{bonsallDuncan}, that will be of use later.
	\begin{lemma}[{\cite[Lemma 5.2]{bonsallDuncan}}]\label{charactHermitian}
		Let $A$ be unital Banach algebra, and let $a\in A$. 
		The following are equivalent:
		\begin{enumerate}[(1)]
			\item $a$ is an hermitian element.
			\item The numerical range $V(a)$ is contained in $\R$.
			\item The limit
			$$\lim_{t\to 0} \frac{1}{t} \left( \lVert 1+ita\rVert-1\right)$$
			exists and is equal to $0$, where the limit is taken as $t$ approaches $0$ through the real numbers.
		\end{enumerate}
	\end{lemma}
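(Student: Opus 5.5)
The plan is to prove the cycle of implications (1)$\Rightarrow$(3)$\Rightarrow$(2)$\Rightarrow$(1). Throughout we use the state space $S(A)=\{f\in A'\colon f(1)=1=\|f\|\}$, so that $V(a)=\{f(a)\colon f\in S(A)\}$, and the identity $V(z_1a+z_2)=z_1V(a)+z_2$ recalled above. The key analytic input is the standard formula relating the numerical range to the norm near the identity:
\[
\sup \operatorname{Re} V(a)=\lim_{t\to 0^+}\frac{1}{t}\bigl(\|1+ta\|-1\bigr)=\lim_{t\to0^+}\frac1t\log\|e^{ta}\|.
\]
I would prove or quote this first. The function $t\mapsto\|1+ta\|$ is convex, so the one-sided limit exists. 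For the inequality ``$\geq$'', pick a state $f$ and note $\|1+ta\|\ge \operatorname{Re} f(1+ta)=1+t\operatorname{Re} f(a)$. For ``$\leq$'', apply Hahn--Banach to a functional norming $1+ta$ and let $t\to0^+$. A weak$^*$ cluster point of these functionals is a state $f$ with $\operatorname{Re} f(a)$ at least the limit. The exponential version follows from $e^{ta}=1+ta+O(t^2)$.

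For (1)$\Rightarrow$(3): if $\|e^{ita}\|=1$ for all real $t$, then $\frac1t\log\|e^{\pm ita}\|=0$. The formula applied to $ia$ and $-ia$ then gives $\sup\operatorname{Re} V(ia)=0=\sup\operatorname{Re} V(-ia)$. Replacing $e^{ita}$ by $1+ita$ changes the norm by $O(t^2)$, so the one-sided limits of $\frac1t(\|1+ita\|-1)$ vanish from both sides, which is (3).

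For (3)$\Rightarrow$(2): the right-hand limit in (3) equals $\sup\operatorname{Re} V(ia)=-\inf\operatorname{Im} V(a)$. The left-hand limit, after substituting $t\mapsto -t$, equals $-\sup\operatorname{Re} V(-ia)=-\sup\operatorname{Im}V(a)$. Both are $0$, so $\operatorname{Im} V(a)=\{0\}$ and $V(a)\subseteq\R$.

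For (2)$\Rightarrow$(1): from $V(a)\subseteq\R$ we get $\sup\operatorname{Re} V(\pm ia)=0$, and we need to integrate this infinitesimal bound. Set $\phi(t)=\log\|e^{ita}\|$. Submultiplicativity makes $\phi$ subadditive, so
\[
\phi(t+h)-\phi(t)\le\log\|e^{iha}\|=o(h).
\]
The upper right Dini derivative of $\phi$ is therefore $\le0$ everywhere, so $\phi$ is nonincreasing on $[0,\infty)$; since $\phi(0)=0$, we get $\|e^{ita}\|\le1$ for $t\ge0$. The same argument with $-a$ handles $t\le0$. Finally $1=\|e^{ita}e^{-ita}\|\le\|e^{ita}\|\,\|e^{-ita}\|\le 1$ forces equality, which is (1). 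Alternatively one can use the cleaner bound $\|e^{b}\|\le e^{\sup\operatorname{Re}V(b)}$, obtained by the same subadditivity argument.

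I expect the main obstacle to be the ``$\leq$'' direction of the limit formula, which requires extracting a genuine state from norming functionals via a weak$^*$ compactness argument. That step, together with the Dini-derivative integration in (2)$\Rightarrow$(1), is where care is needed. Everything else is bookkeeping with the identity $V(z_1a+z_2)=z_1V(a)+z_2$. Since the statement is quoted from \cite[Lemma 5.2]{bonsallDuncan}, one may also simply cite it.
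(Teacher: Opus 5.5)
Your proof is correct and is essentially the standard argument behind the cited \cite[Lemma 5.2]{bonsallDuncan}; the paper quotes that lemma without proof. Everything rests on the formula $\sup\operatorname{Re}V(a)=\lim_{t\to0^+}\frac1t(\|1+ta\|-1)=\lim_{t\to0^+}\frac1t\log\|e^{ta}\|$, which is the same result (\cite[Theorem 5.2]{bonsallDuncan}) the paper invokes in the proof of \autoref{limitLemma}. Two things are used implicitly. First, the standing convention $\|1\|=1$. Second, in (2)$\Rightarrow$(1), the Dini-derivative step needs continuity of $\phi$; this holds, but subadditivity alone gives the shortcut $\phi(t)\le n\,\phi(t/n)\to 0$.
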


    We now introduce the notion of an invertible isometry of a unital normed algebra. 
    Although we work mostly with unital Banach algebras, we will later need this notion for a dense subalgebra of a Banach algebra rather than for the Banach algebra itself. 
    For this reason, we formulate the definition in the more general setting.

	\begin{definition}
		Let $A$ be a unital normed algebra. We define its group of \textit{invertible isometries} as 
		$$\mathcal{U}(A) = \{u\in A \colon u \text{ is invertible and } \lVert u \rVert,\lVert u^{-1} \rVert \leq 1\}.$$
		We refer to the elements of $\unit{A}$ as \textit{invertible isometries}.
	\end{definition}

    In the context of the definition above, it is easy to see that $\unit{A}$ is a group. 
	It is also straightforward to check that an invertible element $u\in A$ belongs to $\mathcal{U}(A)$ if and only if $\lVert u\rVert=1$ and $\lVert u^{-1} \rVert = 1$. 
	The following example will be needed later on.

    \begin{remark}
        For a compact, Hausdorff space $X$, one can readily verify that:
        $$\unit{C(X)} = C(X,\mathbb{T})= \{f\in C(X) \colon |f(x)| = 1 \text{ for all }x\in X\}.$$ 
    \end{remark}

	For a contractive unital homomorphism $\varphi\colon A\to B$ between unital normed algebras $A$ and $B$, it is clear that $\varphi(\unit{A})$ is contained in $\unit{B}$.

	\begin{lemma}\label{unitariesActByConj}
		Let $A$ be a unital Banach algebra. 
		If $u\in \unit{A}$ then $uA_\mathsf{h}u^{-1}$ is equal to $A_\mathsf{h}$. Consequently, $\unit{A}$ acts on $A_\mathsf{h}$ by conjugation.
	\end{lemma}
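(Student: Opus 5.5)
The plan is to verify the defining condition of hermitian elements directly, using that conjugation by an invertible isometry is itself an isometry of $A$. Fix $u\in\unit{A}$ and $a\in A_\mathsf{h}$. The first observation is that for every $b\in A$ we have
\[
\lVert ubu^{-1}\rVert\le \lVert u\rVert\,\lVert b\rVert\,\lVert u^{-1}\rVert\le \lVert b\rVert,
\]
and applying this also to $u^{-1}$ (which lies in $\unit{A}$ since $\unit{A}$ is a group) gives $\lVert b\rVert=\lVert u^{-1}(ubu^{-1})u\rVert\le\lVert ubu^{-1}\rVert$. Hence the map $\mathrm{Ad}_u\colon b\mapsto ubu^{-1}$ is an isometric, unital algebra automorphism of $A$, with inverse $\mathrm{Ad}_{u^{-1}}$.

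Next I use that $\mathrm{Ad}_u$ is a continuous unital homomorphism, so it commutes with the exponential series: $e^{it\,uau^{-1}}=u\,e^{ita}\,u^{-1}$ for every $t\in\R$. This follows termwise, since $(uau^{-1})^n=ua^nu^{-1}$, and the series converges absolutely. Taking norms and using the isometry property gives $\lVert e^{it\,uau^{-1}}\rVert=\lVert e^{ita}\rVert=1$ for all $t\in\R$. So $uau^{-1}\in A_\mathsf{h}$ by \autoref{defHerm}, which shows $uA_\mathsf{h}u^{-1}\subseteq A_\mathsf{h}$. Alternatively, condition (3) of \autoref{charactHermitian} would work just as well, since $\lVert 1+it\,uau^{-1}\rVert=\lVert u(1+ita)u^{-1}\rVert=\lVert 1+ita\rVert$.

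For the reverse inclusion, apply the same argument to $u^{-1}$. Given $a\in A_\mathsf{h}$, we get $u^{-1}au\in A_\mathsf{h}$, and then $a=u(u^{-1}au)u^{-1}\in uA_\mathsf{h}u^{-1}$, so equality holds. Finally, $\mathrm{Ad}_{uv}=\mathrm{Ad}_u\circ\mathrm{Ad}_v$ and $\mathrm{Ad}_1=\mathrm{id}$, and each $\mathrm{Ad}_u$ restricts to a (real-linear) bijection of $A_\mathsf{h}$. Therefore $u\mapsto \mathrm{Ad}_u|_{A_\mathsf{h}}$ is an action of $\unit{A}$ on $A_\mathsf{h}$.

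There is no real obstacle here. The only point needing care is that $\mathrm{Ad}_u$ is isometric rather than merely contractive, which is exactly where both $\lVert u\rVert\le1$ and $\lVert u^{-1}\rVert\le 1$ are used.
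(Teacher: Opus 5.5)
Your proof is correct and follows the paper's argument: you first show that conjugation by $u$ is isometric using $\lVert u\rVert,\lVert u^{-1}\rVert\le 1$, then check the hermitian condition for $uau^{-1}$, and get the reverse inclusion by applying the same argument to $u^{-1}$. The only difference is that the paper verifies condition (3) of \autoref{charactHermitian} (the alternative you mention), whereas your main route checks $\lVert e^{it\,uau^{-1}}\rVert=1$ directly from \autoref{defHerm}; this is slightly more self-contained, since it avoids the Bonsall--Duncan characterization.
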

	\begin{proof}
        First we note that, for $u\in\unit{A}$, conjugation by $u$ is isometric: for all $a\in A$, we have that
        $$\lVert a\rVert = \lVert u^{-1}u a u^{-1}u\rVert \leq \lVert u^{-1}\rVert\lVert uau^{-1}\rVert \lVert u\rVert = \lVert uau^{-1}\rVert\leq \lVert u\rVert \lVert a \rVert \lVert u^{-1}\rVert = \lVert a \rVert,$$
        hence $\lVert uau^{-1}\rVert = \lVert a\rVert$. 
     	Let $a\in A_\mathsf{h}$ and let $u\in\unit{A}$. For all $t\in \R$, we have
		$$\frac{1}{t}\left(\lVert 1+it(uau^{-1}) \rVert -1\right) = \frac{1}{t} \left( \lVert u(1+ita)u^{-1} \rVert  -1\right) = \frac{1}{t} \left( \lVert 1+ita \rVert -1\right) \xrightarrow[t \to 0]{} 0.$$
		Hence, using the equivalence of (1) and (3) in \autoref{charactHermitian}, we obtain  $uau^{-1}\in A_\mathsf{h}$. 
        We deduce that $uA_\mathsf{h}u^{-1}\subseteq A_\mathsf{h}$, and by using $u^{-1}$ in place of $u$, we get equality. 
	\end{proof}

	To end this section, we take a detour in order to compute the value of the limit in (3) of \autoref{charactHermitian} when $A$ is a commutative $C^*$-algebra.
	In what follows, for a subset $E$ of $\C$, we denote its convex hull by $\mathrm{co}(E)$, that is, the smallest convex subset of $\C$ containing $E$.

	Let $A$ be a unital Banach algebra. 
	We say that $a\in A$ is \textit{normal} if there exist $h,k\in A_\mathrm{h}$ satisfying $hk=kh$ such that $a=h+ik$ (\cite[Definition 5.13]{bonsallDuncan}). 
	When $A$ is a $C^*$-algebra this is equivalent to $a^*a = aa^*$, so the terminology is not conflicting.

	\begin{lemma}\label{convexHull2}
		Let $X$ be a compact Hausdorff space and let $f\in C(X)$. Then $V(f)=\mathrm{co}(f(X))$.
	\end{lemma}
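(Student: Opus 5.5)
We prove the two inclusions separately, using the Riesz representation of the dual of $C(X)$. Throughout, $V(f)$ is computed in the unital Banach algebra $C(X)$ with the supremum norm.

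For $\mathrm{co}(f(X))\subseteq V(f)$, note first that $V(f)$ is convex. Indeed, the set of states $S=\{\varphi\in C(X)'\colon \varphi(1)=1=\lVert\varphi\rVert\}$ is convex: a convex combination of states sends $1$ to $1$, has norm at most $1$, and hence has norm exactly $1$. The image of the convex set $S$ under the linear map $\varphi\mapsto\varphi(f)$ is therefore convex. Next, for each $x\in X$ the point evaluation $\mathrm{ev}_x$ satisfies $\mathrm{ev}_x(1)=1$ and $\lVert\mathrm{ev}_x\rVert=1$, so $f(x)=\mathrm{ev}_x(f)\in V(f)$. Together these give $\mathrm{co}(f(X))\subseteq V(f)$.

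For $V(f)\subseteq\mathrm{co}(f(X))$, take $\varphi\in S$. By the Riesz representation theorem, $\varphi(g)=\int_X g\,d\mu$ for a regular complex Borel measure $\mu$ with $\lVert\mu\rVert=1$ and $\mu(X)=1$. Then $|\mu|(X)=1=\mu(X)$, and this equality forces $\mu=|\mu|$. To see this, write $d\mu=h\,d|\mu|$ with $|h|=1$. Then $\int_X(1-\mathrm{Re}\,h)\,d|\mu|=0$, so $h=1$ $|\mu|$-almost everywhere. Hence $\mu$ is a regular Borel probability measure, and $\varphi(f)=\int_X f\,d\mu$.

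The main step is to show that this integral lies in $\mathrm{co}(f(X))$. The set $K=f(X)$ is compact, so $\mathrm{co}(K)$ is compact in $\C$ by Carathéodory's theorem. It is therefore the intersection of the closed half-planes containing it. Fix such a half-plane $\{z\colon \mathrm{Re}(\bar w z)\le c\}$. Every value $f(x)$ satisfies $\mathrm{Re}(\bar w f(x))\le c$, and integrating against the probability measure $\mu$ gives $\mathrm{Re}\bigl(\bar w\int_X f\,d\mu\bigr)\le c$. So $\varphi(f)$ lies in every such half-plane, hence in $\mathrm{co}(K)$.

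The only delicate point is the passage from a closed convex set to the convex hull, and compactness of $\mathrm{co}(K)$ in the plane handles it. Alternatively, one can cite \cite{bonsallDuncan}, which states that $V(a)$ is the closed convex hull of the spectrum for elements of a commutative $C^*$-algebra. Combining the two inclusions gives $V(f)=\mathrm{co}(f(X))$.
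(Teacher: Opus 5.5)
Your proof is correct, but it takes a different route from the paper.

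The paper's proof is one line. It observes that $f$ is normal in the sense of Bonsall--Duncan, that is, $f=\mathrm{Re}\,f+i\,\mathrm{Im}\,f$ with commuting hermitian parts, and that its spectrum is $f(X)$. It then cites \cite[Corollary 5.11]{bonsallDuncan}, which gives $V(a)=\mathrm{co}(\mathrm{Sp}(a))$ for normal elements $a$ of an arbitrary unital Banach algebra. Your closing ``alternatively'' remark is essentially this proof, phrased for commutative $C^*$-algebras rather than normal elements.

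Your main argument works directly inside $C(X)$:
\begin{itemize}
\item Convexity of the state space, together with point evaluations, gives $\mathrm{co}(f(X))\subseteq V(f)$.
\item For the reverse inclusion, the Riesz representation and your observation that $\mu(X)=1=|\mu|(X)$ forces $\mu\geq 0$ show that every state is integration against a probability measure. A half-plane separation argument then places $\int_X f\,d\mu$ in $\mathrm{co}(f(X))$.
\end{itemize}

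Each approach has its advantage. Yours is elementary and self-contained, using only Riesz--Markov and planar convexity. It also makes explicit two points the citation hides: the states of $C(X)$ are exactly the regular Borel probability measures, and no closure is needed on the convex hull, since $\mathrm{co}(f(X))$ is already compact by Carath\'eodory. The paper's citation is shorter and applies to normal elements of any unital Banach algebra. However, it rests on the general theory of numerical ranges in Banach algebras rather than on anything specific to $C(X)$.
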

	\begin{proof}
		Since the spectrum of $f$ is $f(X)$ and $f$ is clearly normal, the result follows from \cite[Corollary 5.11]{bonsallDuncan}.
	\end{proof}

	The next result will be useful to characterize the hermitian elements of the $\ell^1$-crossed product (see \autoref{coreOfl1}). Given a complex number $z$, we denote by $\Re{z}$ its real part and by $\Im{z}$ its imaginary part.
	\begin{lemma}\label{limitLemma}
		Let $X$ be a compact Hausdorff space, and let $f\in C(X)$. Then
		\begin{equation*}
			\begin{split}
				&\lim_{t\to0^+} \frac{1}{t}\left( \lVert 1+itf\rVert - 1\right) = -\min \text{Im}(\mathrm{co}(f(X))),\\
				&\lim_{t\to0^-} \frac{1}{t}\left(\lVert 1+itf \rVert - 1\right) = -\max\text{Im}(\mathrm{co}(f(X))).
			\end{split}
		\end{equation*}
	\end{lemma}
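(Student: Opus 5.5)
Write $f=u+iv$ with $u=\Re{f}$ and $v=\Im{f}$ real-valued continuous functions. I will compute the norm directly. For real $t$ we have
\[
\lVert 1+itf\rVert^2=\max_{x\in X}|1+itf(x)|^2=\max_{x\in X}\big((1-tv(x))^2+t^2u(x)^2\big)=\max_{x\in X}\big(1-2tv(x)+t^2|f(x)|^2\big).
\]
The first step is to notice that $\min\Im{\mathrm{co}(f(X))}=\min_{x}v(x)$ and $\max\Im{\mathrm{co}(f(X))}=\max_x v(x)$. This holds because $\Im$ is a real linear functional, so it attains its extrema over the convex hull of the compact set $f(X)$ at points of $f(X)$ itself. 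It therefore suffices to show that the right derivative at $0$ of $t\mapsto\lVert 1+itf\rVert$ is $-\min v$, and that the left derivative is $-\max v$. Alternatively, one could combine \autoref{convexHull2} with the general fact that $\lim_{t\to0^+}\frac1t(\lVert1+ta\rVert-1)=\max\Re{V(a)}$ for any unital Banach algebra. The direct computation is self-contained, however, and I will use that.

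For $t>0$, put $m=\min v$ and $M=\lVert f\rVert_\infty^2$. Then
\[
1-2tm\le \lVert1+itf\rVert^2\le 1-2tm+t^2M .
\]
The lower bound comes from evaluating at a point where $v$ attains its minimum. The upper bound comes from bounding each term separately. Taking square roots, I use the estimate $\sqrt{1+s}=1+\frac s2+O(s^2)$ as $s\to0$, which is uniform in this situation. Both bounds then give $\lVert 1+itf\rVert=1-tm+O(t^2)$. Subtracting $1$, dividing by $t$ and letting $t\to0^+$ gives the limit $-m$.

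For $t<0$, write $t=-|t|$. Then $-2tv(x)=2|t|v(x)$, and its maximum over $x$ is $2|t|\max v$. The same sandwich argument gives $\lVert1+itf\rVert=1+|t|\max v+O(t^2)=1-t\max v+O(t^2)$. Dividing by $t<0$ yields the limit $-\max v$.

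The only point needing mild care is that the maximum of a sum is not the sum of the maxima. The sandwich handles this, because the $t^2$ term is uniformly bounded and so does not affect the first-order behaviour. I do not expect any genuine obstacle.
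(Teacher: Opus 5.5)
Your proof is correct, but it takes a different route from the paper's.

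\textbf{The paper's route.} The paper never expands the norm. It applies \cite[Theorem 5.2]{bonsallDuncan}, which gives $\lim_{t\to0^+}\frac1t(\lVert 1+ta\rVert-1)=\max\Re{V(a)}$ in an arbitrary unital Banach algebra, to $a=if$ and $a=-if$. It then uses $V(\pm if)=\pm iV(f)$ to rewrite the two one-sided limits as $-\min\Im{V(f)}$ and $-\max\Im{V(f)}$. Finally it substitutes $V(f)=\mathrm{co}(f(X))$ from \autoref{convexHull2}, which rests on \cite[Corollary 5.11]{bonsallDuncan} for normal elements. This is exactly the alternative you mention and set aside.

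\textbf{Your route.} You use only the identity $\lVert 1+itf\rVert^2=\max_x\big(1-2t\,\Im{f(x)}+t^2|f(x)|^2\big)$, a two-sided sandwich, and the fact that a real-linear functional has the same extrema on $\mathrm{co}(f(X))$ as on $f(X)$. Each step checks out, including the sign bookkeeping for $t<0$.

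\textbf{What each buys.} Your argument is self-contained: it needs no numerical-range theory and no spectral description of $V$ for normal elements. It also gives an explicit $O(t^2)$ error term. It shows the convex hull in the statement is cosmetic, since only $\min\Im{f}$ and $\max\Im{f}$ matter. The paper's argument is shorter and fits the rest of Section 2, where hermitian elements are handled through numerical ranges. Its derivative formula holds in any unital Banach algebra, whereas your computation is tied to the sup norm of $C(X)$. That restriction costs nothing here, because in \autoref{coreOfl1} the lemma is only applied to the coefficient $f_e\in C(X)$.
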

	\begin{proof}
		Using \cite[Theorem 5.2]{bonsallDuncan} at the first step, we get 
		\begin{equation*}
			\begin{split}
				\lim_{t\to0^+}\frac{1}{t}\left( \lVert 1+itf\rVert-1\right) &= \max\{ \Re{z} \colon z \in V(if)\}\\
                &= \max\{ \Re{z} \colon z \in iV(f)\}\\
				&= \max\{-\Im{z} \colon z\in V(f)\}.
			\end{split}
		\end{equation*}
		By \autoref{convexHull2} we have $V(f) = \mathrm{co}(f(X))$, so we get
		\begin{equation*}
			\begin{split}
				\lim_{t\to0^+}\frac{1}{t}\left( \lVert 1+itf\rVert-1\right) &= \max\{-\Im{z} \colon z \in \mathrm{co}(f(X))\}\\
				&= -\min\{\Im{z} \colon z\in \mathrm{co}(f(X))\}\\
				&= -\min\Im{\mathrm{co}(f(X))}.
			\end{split}
		\end{equation*}
		Similarly, we get
		\begin{equation*}
			\begin{split}
				\lim_{t\to 0^-}\frac{1}{t}\left( \lVert 1+itf\rVert-1\right) &= -\lim_{t\to0^-} \frac{1}{-t}\left( \lVert 1+(-t)(-if)\rVert-1\right)\\
				&= -\lim_{t\to0^+}\frac{1}{t}\left( \lVert 1+t(-if)\rVert-1\right)\\
				&=-\max\{\Re{z} \colon z \in V(-if)\}\\
				&= -\max\{ \Re{z} \colon z \in -iV(f)\}\\
				&= -\max\{\Im{z} \colon z\in V(f)\}\\
				&= -\max\{\Im{z} \colon z \in \mathrm{co}(f(X))\}\\
				&= -\max \Im{\mathrm{co}(f(X))},
			\end{split}
		\end{equation*}
		which finishes the proof.
	\end{proof}

	\section{Crossed product Banach algebras} 
	We now introduce the two crossed product Banach algebras that will be of interest to us: the \textit{$\ell^1$-crossed product} and the \textit{reduced $L^p$-operator algebra crossed product}, both associated to the action of a discrete group on a compact Hausdorff space via homeomorphisms.

	Let $G\curvearrowright^\sigma X$ be an action of a discrete group $G$ on a compact, Hausdorff space $X$ by homeomorphisms. For $s\in G$, denote by $\sigma_s \colon X \to X$ the associated homeomorphism.  
	We denote by $G\curvearrowright^\alpha C(X)$ the associated action, where $s\in G$ acts via $\alpha_s(f) = f\circ \sigma_s^{-1}$ for all $f\in C(X)$. 
    When there is no risk of confusion we will drop the symbol $\sigma$ in the notation and simply write $\action$.

    \begin{definition}
        Let $\action$ be an action of a discrete group $G$ on a compact, Hausdorff space $X$. 
        Set
    	$$\lone{G}{X} = \left\{ f\colon G \to C(X) \text{ with } \sum_{s\in G} \lVert f(s)\rVert_\infty < \infty\right\},$$
    	where $\lVert\>\cdot\>\rVert_\infty$ denotes the supremum norm in $C(X)$.
        When endowed with the norm $\lVert f\rVert = \sum_{s\in G} \lVert f(s)\rVert_\infty$, for $f\in \luno$, it follows that $\lone{G}{X}$ is a Banach space. 
    	We define a product in $\lone{G}{X}$ the following way: for $f,g\in \lone{G}{X}$ and $s\in G$, we set
    	$$(fg)(s) = \sum_{t\in G} f(t)\alpha_t (g(t^{-1}s)).$$
    	With this structure, $\lone{G}{X}$ is a Banach algebra. 
        We refer to this algebra as the \textit{$\ell^1$-crossed product} associated to the action $\action$. 
    \end{definition}

	Given $s\in G$, we denote by $\delta_s$ the element of $\lone{G}{X}$ such that $\delta_s(t) = 0$ if $t\neq s$, and $\delta_s(s) = 1$ (the Dirac function at $s$). 
	An arbitrary element $f\in \lone{G}{X}$ can thus be written as 
	$$f= \sum_{s\in G} f(s) \delta_s = \sum_{s\in G}f_s\delta_s,$$
	where $f_s = f(s)$ for all $s\in G$. 
	We will usually use this description of elements of $\lone{G}{X}$ without explicit mention of it. With this notation, the product in $\luno$ is described by the rule
	$$(f_s\delta_s)(f_{t}\delta_{t}) = f_s \alpha_{s}(f_{t})\delta_{st}$$
	for all $s,t\in G$ and all $f_s,f_t \in C(X)$.

    \begin{remark}
        There is a canonical, isometric embedding $\iota\colon C(X)\to \luno$ given by $\iota(f) = f\delta_e$ for all $f\in C(X)$. For convenience, we usually omit $\iota$ and identify $f$ with $f\delta_e$.
    \end{remark}

    \begin{remark}\label{implAction}
        For $s\in G$, the element $\delta_s$ is an invertible isometry. 
        Moreover, if $f\in C(X)$, then $\delta_s f \delta_s^{-1} = \alpha_s(f).$
        In other words, $\delta_s$ implements $\alpha_s$ within $\luno$.
    \end{remark}

	For $p\in[1,\infty)$, we define the reduced $L^p$-operator algebra crossed product following \cite[Section 2]{twistedCrossProd}. (The original definition, due to Phillips in \cite{Phi_crossed}, is slightly different.)
	Recall that an \textit{$L^p$-operator algebra} is a Banach algebra $A$ such that there exist an $L^p$-space $E$ and an isometric homomorphism $A\to B(E)$.

    \begin{definition}\label{defLpCrossProd}
        For $p\in[1,\infty)$, let $\Lambda_p \colon \lone{G}{X} \to B(\ell^p(X\times G))$ be the map defined as follows: for $f = \sum_{s\in G}f_s\delta_s\in \lone{G}{X}$ we set
        \begin{equation*}
            \Lambda_p(f) \xi (x,t) = \sum_{s\in G} f_s(x)\xi(\sigma_s^{-1}(x),s^{-1}t)  
        \end{equation*}
        for all $\xi\in \lpxg$ and for all $(x,t)\in X\times G$. We define the \textit{reduced $L^p$-operator algebra crossed product} of $\action$, denoted by $\Lp{G}{X}$, as 
	$$\Lp{G}{X} = \overline{\Lambda_p(\lone{G}{X})}^{\, \|\cdot\|} \subseteq B(\lpxg).$$
    \end{definition}

    This is not the typical way to construct the reduced $L^p$-operator algebra crossed product. 
	Usually, the norm in $\Lp{G}{X}$ is defined by taking the supremum over the norm of the image of all regular representations of $\lone{G}{X}$; see \cite[Section 7]{modernLp} or \cite[Paragraph 2.17]{rigidityLp} for details. 
    Both constructions yield the same algebra by \cite[Proposition 3.1]{twistedCrossProd}.

    \begin{remark}
        Given $s\in G$, set $\lambda_s = \Lambda_p(\delta_s)\in B(\lpxg)$. It is routine to check that 
        $$\lambda_s(\xi)(x,t) = \xi(\sigma_s^{-1}(x),s^{-1}t)$$
        for all $\xi\in\lpxg$ and all $(x,t)\in X\times G$. With this notation, for $f=\sum_{s\in G}f_s \delta_s \in \luno$, we get
        $$\Lambda_p \left(\sum_{s\in G}f_s\delta_s \right) = \sum_{s\in G} f_s\lambda_s \in \Lp{G}{X}.$$
    \end{remark}

	We will need to describe the surjective linear isometries of $\lpxg$. 
    This, for $p\in(1,\infty)$ and $p\neq 2$, follows from the general Lamperti theorem. 
    We must introduce some notation first.

    \begin{nota}
        Given a (discrete) set $Y$ and a bijection $\varphi\colon Y \to Y$, we write $u_\varphi \in B(\ell^p(Y))$ for the invertible isometry given by $u_\varphi(\xi) = \xi \circ \varphi^{-1}$ for $\xi\in\ell^p(Y)$. Also, given $\eta\colon Y\to \T$, we write $m_\eta\in B(\ell^p(Y))$ for the invertible isometry given by $m_\eta(\xi) = \eta \xi$ for $\xi\in\ell^p(Y)$.
    \end{nota}

    Lamperti's theorem states that, for $p\in (1,\infty)\setminus\{2\}$, all surjective isometries are obtained from the isometries that we have just described. 
	We state the theorem in the generality needed for our context.
    
    \begin{theorem}[See, for example, {\cite[Theorem 3.7]{lamperti}}]\label{discreteLamperti}
		Let $p\in(1,\infty)$ with $p\neq2$ and let $Y$ be a set. 
        Let $T\colon \ell^p(Y)\to\ell^p(Y)$ be a surjective linear isometry. 
		Then there exist a bijection $\varphi\colon Y\to Y$ and a function $\eta\colon Y \to \T$ such that $T= m_\eta u_\varphi$.
	\end{theorem}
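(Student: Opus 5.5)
The plan is the classical disjointness argument: first characterize disjointly supported vectors metrically, then use that $T$ and $T^{-1}$ preserve disjointness to force $T$ to map point masses to unimodular multiples of point masses.

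\textbf{Metric characterization of disjointness.} For $\xi,\zeta\in\ell^p(Y)$ put
$$\Delta(\xi,\zeta)=\lVert\xi+\zeta\rVert_p^p+\lVert\xi-\zeta\rVert_p^p-2\lVert\xi\rVert_p^p-2\lVert\zeta\rVert_p^p=\sum_{y\in Y}\Big(|a_y+b_y|^p+|a_y-b_y|^p-2|a_y|^p-2|b_y|^p\Big),$$
where $a_y=\xi(y)$ and $b_y=\zeta(y)$. I will show that each summand has a fixed sign and vanishes exactly when $a_yb_y=0$. Then $\Delta(\xi,\zeta)=0$ if and only if $\xi$ and $\zeta$ have disjoint supports.

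For the scalar claim, write $S=|a|^2+|b|^2$ and $c=2\,\mathrm{Re}(a\bar b)$, so that $|a\pm b|^p=(S\pm c)^{p/2}$.
\begin{itemize}
\item If $p>2$, convexity of $x\mapsto x^{p/2}$ gives $\tfrac12\big((S+c)^{p/2}+(S-c)^{p/2}\big)\geq S^{p/2}$. Strict superadditivity of $x\mapsto x^{p/2}$ then gives $S^{p/2}\geq |a|^p+|b|^p$, with strict inequality when $a,b\neq0$. So the summand is $\geq0$, and it is $>0$ when $ab\neq0$.
\item If $1<p<2$, concavity and strict subadditivity give the reverse inequalities, again strict when $ab\neq 0$.
\item If $ab=0$ the summand is visibly $0$.
\end{itemize}
Since $T$ is a linear isometry, $\Delta(T\xi,T\zeta)=\Delta(\xi,\zeta)$. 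Hence $T$ preserves disjointness of supports. The same holds for $T^{-1}$, which is also a surjective linear isometry.

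\textbf{Point masses go to point masses.} Fix $y\in Y$ and suppose $T\delta_y$ had support with at least two points. Then we could write $T\delta_y=u+v$ with $u,v$ nonzero and disjointly supported. Consequently $T^{-1}u$ and $T^{-1}v$ would be nonzero, disjointly supported, and sum to $\delta_y$, which is impossible. So $T\delta_y=\eta(\varphi(y))\,\delta_{\varphi(y)}$ for some $\varphi(y)\in Y$, and $|\eta(\varphi(y))|=1$ because $\lVert T\delta_y\rVert_p=1$.

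The map $\varphi$ is injective, since for $y\neq y'$ the vectors $\delta_y,\delta_{y'}$ are disjoint and so are their images. It is surjective: if $z\notin\varphi(Y)$, then $\delta_z$ is disjoint from every $T\delta_y$. Thus $T^{-1}\delta_z$ is disjoint from every $\delta_y$, so $T^{-1}\delta_z=0$, a contradiction. This defines $\eta$ on all of $Y$ with values in $\T$.

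\textbf{Conclusion.} By linearity, $T$ agrees with $m_\eta u_\varphi$ on finitely supported functions: indeed $m_\eta u_\varphi\delta_y=\eta\,\delta_{\varphi(y)}$. Both operators are bounded and finitely supported functions are dense since $p<\infty$, so $T=m_\eta u_\varphi$.

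The main obstacle is the first step: the strict sign of the scalar expression, which is where $p\neq2$ is essential. The convexity and superadditivity argument above is the route I would take.
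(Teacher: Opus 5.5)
Your proof is correct. It does not follow the paper, which gives no proof of this statement at all and simply quotes it as a special case of Lamperti's theorem on surjective isometries of $L^p$-spaces.

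Your argument is essentially Lamperti's own strategy specialized to counting measure. The key input is the same. For $p\neq 2$, the identity $\lVert\xi+\zeta\rVert_p^p+\lVert\xi-\zeta\rVert_p^p=2\lVert\xi\rVert_p^p+2\lVert\zeta\rVert_p^p$ holds exactly when $\xi$ and $\zeta$ have disjoint supports. You verify this cleanly, using convexity or concavity of $x\mapsto x^{p/2}$ together with strict super- or subadditivity.

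In the purely atomic setting, you can then skip the measure-theoretic half of the general theorem (set isomorphisms and Radon--Nikodym factors). The nonzero scalar multiples of point masses are exactly the nonzero vectors that cannot be split into two nonzero disjointly supported pieces. So disjointness preservation by $T$ and $T^{-1}$ immediately gives the bijection $\varphi$ and the unimodular weights $\eta$, and density of finitely supported vectors finishes the proof.

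The citation buys generality: arbitrary measure spaces, including non-atomic ones. Your route buys a short, self-contained proof valid for an arbitrary set $Y$, with no countability or $\sigma$-finiteness hypothesis. That matters here. The paper applies the result with $Y=X\times G$ for an arbitrary compact Hausdorff space $X$, where counting measure is typically not $\sigma$-finite, and many standard formulations of Lamperti's theorem assume $\sigma$-finiteness. (Your argument also works verbatim for $p=1$, since only $p\neq 2$ is used.)
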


	\section{Rigidity of the $\ell^1$-crossed product}\label{proofSection}

    The purpose of this section is to prove the main result of this paper, \autoref{mainThm}, which asserts that two $\ell^1$-crossed products are isometrically isomorphic if and only if the actions are the topologically isomorphic; see \autoref{CorEquiv}. We define this notion first.

	\begin{definition}\label{isomTopSyst}
        We say that two actions $G\curvearrowright^\sigma X$ and $H\curvearrowright^\tau Y$ are \textit{topologically isomorphic} if there are a group isomorphism $\psi\colon G \to H$ and a homeomorphism $\theta\colon X\to Y$ such that $\theta\circ\sigma_s = \tau_{\psi(s)}\circ\theta$ for all $s\in G$. 
	\end{definition}

    The above notion is weaker than conjugacy since, even when $G=H$, we allow for a nontrivial automorphism of $G$.

	In what follows, for $(x,t)\in X\times G$, when writing $\mathbbm{1}_{(x,t)}$ we mean the characteristic function of $\{(x,t)\}$, which is clearly an element of $\lpxg$. 
	Similarly, if $Y$ is contained in $X\times G$, we denote the characteristic function of $Y$ by $\mathbbm{1}_{Y}$.
	\begin{lemma}\label{unitariesOnLp}
		Let $\action$ be an action of a discrete group $G$ on a compact, Hausdorff space $X$, and let $p\in(1,\infty)$ with $p\neq 2$. Let $\Lambda_p\colon \lone{G}{X} \to \Lp{G}{X}$ be the map from \autoref{defLpCrossProd}. 
		Then $\unit{\Lambda_p(\lone{G}{X})}$ is contained in the following set:
		$$\left\{ \sum_{s\in G} \eta\mathbbm{1}_{X_s} \lambda_s \in B(\lpxg)\colon \eta \text{ is a function } X\to \T \text{ and } X = \bigsqcup_{s\in G}X_s\right\}.$$
        We note that in the set above we do not require the function $\eta$ to be continuous in any way.
	\end{lemma}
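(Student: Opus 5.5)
Let $u=\Lambda_p(a)\in\unit{\Lambda_p(\lone{G}{X})}$, where $a=\sum_{s\in G}a_s\delta_s\in\lone{G}{X}$. First I show that $u$ is a surjective linear isometry of $\lpxg$. Indeed, $u$ is invertible in $\Lambda_p(\lone{G}{X})$, hence in $B(\lpxg)$. Its operator norm and that of its inverse are at most $1$, since the norm on $\Lambda_p(\lone{G}{X})$ is the operator norm from $B(\lpxg)$. Hence $\|\xi\|=\|u^{-1}u\xi\|\le\|u\xi\|\le\|\xi\|$, so $u$ is isometric and surjective.

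Next I apply \autoref{discreteLamperti} with $Y=X\times G$. This gives a bijection $\varphi$ of $X\times G$ and $\eta_0\colon X\times G\to\T$ with $u=m_{\eta_0}u_\varphi$. In particular, $u\mathbbm{1}_{(y,r)}=\eta_0(\varphi(y,r))\mathbbm{1}_{\varphi(y,r)}$ for every point $(y,r)$. I compare this with the explicit formula from \autoref{defLpCrossProd}:
$$u\mathbbm{1}_{(y,r)}(x,t)=\sum_{s}a_s(x)\mathbbm{1}_{(y,r)}(\sigma_s^{-1}(x),s^{-1}t)=\sum_{s}a_s(x)\,[x=\sigma_s(y),\ t=sr],$$
so
$$u\mathbbm{1}_{(y,r)}=\sum_{s\in G}a_s(\sigma_s(y))\mathbbm{1}_{(\sigma_s(y),sr)}.$$
The points $(\sigma_s(y),sr)$ are distinct for distinct $s$, because their second coordinates differ. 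Since $u\mathbbm{1}_{(y,r)}$ is supported on a single point, for each $y$ there is exactly one $s=:s(y)$ with $a_s(\sigma_s(y))\neq0$. Moreover, $|a_{s(y)}(\sigma_{s(y)}(y))|=1$, because $u$ is an isometry. This $s(y)$ does not depend on $r$, since the coefficients $a_s(\sigma_s(y))$ do not involve $r$.

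Now I reparametrise by the image point $x=\sigma_s(y)$. For each $x\in X$ I want a unique $s$ with $a_s(x)\ne0$. If $a_s(x)\neq0$, put $y=\sigma_s^{-1}(x)$; then $a_s(\sigma_s(y))\ne0$, so $s=s(y)$. This shows that at most one $s$ can satisfy $a_s(x)\ne0$, because for each fixed $y$ only $s(y)$ works. To see that such an $s$ exists, note that $\|u^*\cdots\|$ is not available in $L^p$, so I use surjectivity of $u$ instead. The function $\mathbbm{1}_{(x,t)}$ must be a multiple of some $u\mathbbm{1}_{(y,r)}$, which forces $(x,t)=(\sigma_{s(y)}(y),s(y)r)$ and hence $a_{s(y)}(x)\neq0$.

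Define $X_s=\{x\in X: a_s(x)\neq0\}$ and $\eta(x)=a_s(x)$ for $x\in X_s$. Then $X=\bigsqcup_s X_s$ and $|\eta|=1$, and we get
$$u=\sum_s a_s\lambda_s=\sum_s\eta\mathbbm{1}_{X_s}\lambda_s,$$
where the sum is understood pointwise and converges as before. Here $\mathbbm{1}_{X_s}$ denotes multiplication by the indicator of $X_s\times G$.

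The main obstacle I expect is the bookkeeping that identifies the Lamperti form with the coefficient expansion, in particular the surjectivity argument showing that the sets $X_s$ cover all of $X$. Continuity plays no role in this argument, which is consistent with the remark in the statement.
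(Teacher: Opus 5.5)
Up to and including the definition of $s(y)$ your argument is correct, and so is the surjectivity argument showing that the sets $X_s$ cover $X$. The gap is the disjointness of the $X_s$. The sentence ``at most one $s$ can satisfy $a_s(x)\neq0$, because for each fixed $y$ only $s(y)$ works'' does not follow, because the point $y=\sigma_s^{-1}(x)$ depends on $s$. Suppose $s\neq s'$ both satisfy $a_s(x)\neq 0\neq a_{s'}(x)$. Then you only learn that $s=s(y)$ and $s'=s(y')$ for the two different points $y=\sigma_s^{-1}(x)$ and $y'=\sigma_{s'}^{-1}(x)$, and this contradicts nothing.

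In fact, the property you have established at that stage (every atom is sent to a unimodular multiple of an atom) does not imply disjointness on its own. Let $G=\{e,g\}\cong\mathbb{Z}/2\mathbb{Z}$ act on $X=\{x_1,x_2\}$ by swapping the two points, and take $a=\mathbbm{1}_{\{x_1\}}\delta_e+\mathbbm{1}_{\{x_1\}}\delta_g$. Then $\Lambda_p(a)\mathbbm{1}_{(x_1,r)}=\mathbbm{1}_{(x_1,r)}$ and $\Lambda_p(a)\mathbbm{1}_{(x_2,r)}=\mathbbm{1}_{(x_1,gr)}$ for all $r\in G$, yet $a_e(x_1)=a_g(x_1)=1$. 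This $\Lambda_p(a)$ is of course not injective, and injectivity is exactly the ingredient your argument never uses.

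The repair is short. If $a_s(x)\neq0\neq a_{s'}(x)$, then with $y,y'$ as above you have $\varphi(y,e)=(x,s)=\varphi(y',s'^{-1}s)$, so injectivity of $\varphi$ gives $s'^{-1}s=e$. Alternatively, read Lamperti's form along rows instead of columns. Since $(u\xi)(x,t)=\eta_0(x,t)\,\xi(\varphi^{-1}(x,t))$, testing against $\xi=\mathbbm{1}_{(\sigma_r^{-1}(x),r^{-1}t)}$ gives $a_r(x)=\eta_0(x,t)$ if $\varphi^{-1}(x,t)=(\sigma_r^{-1}(x),r^{-1}t)$, and $a_r(x)=0$ otherwise. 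The second coordinate $r^{-1}t$ pins down $r$, which gives uniqueness, and testing against $\mathbbm{1}_{\varphi^{-1}(x,t)}$ gives existence.

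Once repaired, your proof is a valid and somewhat more direct alternative to the paper's. The paper first uses that $u$ commutes with the right translations $\rho_g$ to show that $\eta$ depends only on $x$ and that $\varphi(x,t)=(\varphi_X(x),s_x^{-1}t)$, in its convention $(u\xi)(x,t)=\eta(x,t)\xi(\varphi(x,t))$. Only then does it compare with the coefficient expansion. Reading the structure directly off the coefficients, as you do, makes that commutant step unnecessary.
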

	\begin{proof}
		For $s \in G$, let $\rho_s$ denote the operator in $B(\lpxg)$ given by
		$$\rho_s(\xi)(x,t) = \xi (x,ts)$$
        for all $\xi\in \lpxg$ and all $(x,t)\in X\times G.$
		
		\textbf{Claim:} \textit{For all $g\in G$ and all $a\in \Lp{G}{X}$ we have $\rho_g a = a \rho_g$.} 
        By density, it suffices to prove the claim for $a=\Lambda_p(f)$ with $f\in\luno$. 
		Let $f=\sum_{s\in G}f_s\delta_s\in\luno$ and fix $g\in G$.
        For $\xi\in\lpxg$ and $(x,t)\in X\times G$, we compute:
		\begin{equation*}
			\begin{split}
				(\rho_g\circ\Lambda_p(f))(\xi)(x,t) &= \rho_g \left( \sum_{s\in G}f_s\lambda_s \right)\xi (x,t) = \rho_g \left( \sum_{s\in G} f_s \lambda_s(\xi) \right)(x,t)\\
				&= \sum_{s\in G} f_s\lambda_s(\xi) (x,tg) = \sum_{s\in G}f_s(x) \lambda_s(\xi) (x,tg)\\
				&= \sum_{s\in G} f_s(x) \xi (\sigma_s^{-1}(x), s^{-1}tg).\\
			\end{split}
		\end{equation*}
        Similarly,
		\begin{equation*}
			\begin{split}
				(\Lambda_p(f)\circ\rho_g)(\xi)(x,t) &= \left( \sum_{s\in G}f_s\lambda_s \right) \rho_g(\xi)(x,t) = \sum_{s\in G}f_s \lambda_s (\rho_g(\xi)) (x,t)\\
				&= \sum_{s\in G}f_s(x) \lambda_s(\rho_g(\xi))(x,t)\\ &= \sum_{s\in G}f_s(x)\rho_g(\xi) (\sigma_s^{-1}(x), s^{-1}t)\\
				&= \sum_{s\in G}f_s(x)\xi(\sigma_s^{-1}(x),s^{-1}tg).
			\end{split}
		\end{equation*}
		This proves the claim.
        \vspace{0.2cm}

		Let $u\in \unit{\Lambda_p(\lone{G}{X})}$ be given. 
        Since $\Lambda_p$ is unital and contractive by \cite[Lemma 2.11]{twistedCrossProd}, it follows that $u$ is a surjective linear isometry of $\lpxg$. 
        Thus, by Theorem \ref{discreteLamperti} there exist a bijection $\varphi\colon X\times G\to X\times G$ and a function $\eta\colon X\times G\to \T$ such that $u = m_{\eta} u_\varphi.$ 
		We can write $\varphi=(\varphi_X,\varphi_G)$, where $\varphi_X$ and $\varphi_G$ are functions from $X\times G$ to $X$ and $G$, respectively. 
		Let $\xi\in \lpxg$ and $(x,t)\in X\times G$. Then
		\begin{equation*}
			\begin{split}
				(u\rho_s)(\xi)(x,t) &= u(\rho_s(\xi))(x,t)\\
                &= m_\eta u_\varphi (\rho_s (\xi))(x,t)\\
                &= \eta(x,t) \rho_s(\xi)(\varphi(x,t))\\
				&= \eta(x,t) \xi (\varphi_X(x,t),\varphi_G(x,t)s).\\
			\end{split}
		\end{equation*}
        Similarly,
        \begin{equation*}
			\begin{split}
				(\rho_s u(\xi))(x,t) &= \rho_s(u(\xi))(x,t)\\
                &= u(\xi)(x,ts)\\
                &= m_\eta u_\varphi (\xi)(x,ts)\\
                &= \eta(x,ts) \xi(\varphi(x,ts))\\
				&= \eta(x,ts)\xi(\varphi_X(x,ts), \varphi_G(x,ts)).
			\end{split}
		\end{equation*}
        By the claim above, we have that $u \rho_s = \rho_s u$ and thus
		\begin{equation}\tag{4.1}\label{eqCommutes}
			\eta(x,t) \xi (\varphi_X(x,t),\varphi_G(x,t)s) = \eta(x,ts) \xi(\varphi_X(x,ts),\varphi_G(x,ts)).
		\end{equation}
		Using that $\eta$ never vanishes, by taking $\xi = \mathbbm{1}_{\varphi(x,ts)}$ in  Equation (\refeq{eqCommutes}), we get that
		$$(\varphi_X (x,t),\varphi_G(x,t)s) = (\varphi_X(x,ts), \varphi_G(x,ts)),$$
		and it follows that $\eta(x,t)$ is equal to $\eta(x,ts)$. 
		Since $s\in G$ is arbitrary, we deduce that $\eta$ is actually only a function on $X$ (meaning that $\eta(x,t) = \eta(x,e)$ for all $t\in G$). 
		In the same way, $\varphi_X$ is actually only a function on $X$, and we also deduce that $\varphi_G(x,t) = \varphi_G(x,e)t$ for all $t\in G$. 
	 Fix $x\in X$ and let $s_x$ denote the inverse of $\varphi_G(x,e)$. Then for $t\in G$ we get $\varphi_G(x,t) = s_x^{-1}t$. 
		As the map $\{x\}\times G\to\{\varphi_X(x)\}\times G$ given by $(x,t)\mapsto (\varphi_X(x),s_x^{-1}t)$ is a bijection and coincides with the restriction of $\varphi$ to $\{x\}\times G$, it follows that $\varphi_X$ must be a bijection when regarding it as a function from $X$ to itself.

		As $u$ belongs to $\Lambda_p(\lone{G}{X})$, there are functions $f_s\in C(X)$, for $s\in G$, such that $u=\sum_{s\in G}f_s\lambda_s$.  
		Then, for all $\xi\in \lpxg$ and all $(x,t)\in X\times G$, we have 
		\begin{equation*}
			\eta(x) \xi (\varphi_X(x), s_x^{-1} t) = \sum_{s\in G}f_s \lambda_s(\xi)(x,t) = \sum_{s\in G} f_s(x) \xi (\sigma_s^{-1}(x), s^{-1}t).
		\end{equation*}
		Setting $\xi = \mathbbm{1}_{(\varphi_X(x),s_x^{-1}t)}$ in the above equation, we obtain  
		\begin{equation*}
			\eta(x) = \sum_{s\in G}f_s(x)\mathbbm{1}_{(\varphi_X(x),s_x^{-1}t)} (\sigma_s^{-1}(x),s^{-1}t) = f_{s_x}(x) \mathbbm{1}_{(\varphi_X(x), s_x^{-1}t)}(\sigma_{s_x}^{-1}(x), s_x^{-1}t).
		\end{equation*}
        Since $\eta(x)\neq 0$ it follows that $\varphi_X(x) = \sigma_{s_x}^{-1}(x)$ and $f_{s_x}(x) = \eta(x)$. 
        Setting $X_s = \{x\in X\colon s_x = s\}$, for $s\in G$, we obtain a partition $\{X_s\colon s\in G\}$ of $X$. 

        Given $x\in X_s$ we have $\varphi(x,t) = (\sigma_s^{-1}(x),s^{-1}t)$ for every $t\in G$.
        Using this, we can conclude that  
		\[u=\sum_{s\in G}f_s\lambda_s =\sum_{s\in G}\eta\mathbbm{1}_{X_s}\lambda_s.\qedhere\]
	\end{proof}

    \begin{lemma}\label{rmkLinearlyIndependent}
        Let $\action$ be an action of a discrete group $G$ on a compact, Hausdorff space $X$, and let $f_s\in\ell^{\infty}(X\times G)$, for $s\in G$, such that $\sum_{s\in G}\lVert f_s\rVert_\infty$ is finite. Then $\sum_{s\in G}m_{f_s}\lambda_s$ defines an element of $B(\lpxg)$. Furthermore, if $\sum_{s\in G}m_{f_s}\lambda_s = 0$, then $f_s=0$ for all $s\in G$.
    \end{lemma}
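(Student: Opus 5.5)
The plan is to establish convergence first and then recover each coefficient by evaluating the operator on indicator functions. For each $s\in G$, the operators $m_{f_s}$ and $\lambda_s$ are bounded on $\lpxg$, with $\lVert m_{f_s}\rVert\leq \lVert f_s\rVert_\infty$ and $\lVert \lambda_s\rVert = 1$. Here $\lambda_s$ is induced by the bijection $(x,t)\mapsto(\sigma_s(x),st)$ of $X\times G$, so it is an isometry of $\lpxg$. Consequently $\sum_{s\in G}\lVert m_{f_s}\lambda_s\rVert\leq\sum_{s\in G}\lVert f_s\rVert_\infty<\infty$. The series is therefore absolutely convergent in the Banach space $B(\lpxg)$, and since absolutely convergent series in a Banach space converge unconditionally, its sum does not depend on any ordering of $G$. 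This gives the first assertion.

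For the second assertion, we evaluate on a point mass. Fix $(y,r)\in X\times G$ and put $\xi=\mathbbm{1}_{(y,r)}$. From the formula for $\lambda_s$, we have $\lambda_s\mathbbm{1}_{(y,r)}=\mathbbm{1}_{(\sigma_s(y),sr)}$, because $\lambda_s\xi(x,t)=\xi(\sigma_s^{-1}(x),s^{-1}t)$ is nonzero exactly when $x=\sigma_s(y)$ and $t=sr$. Hence
$$\Big(\sum_{s\in G}m_{f_s}\lambda_s\Big)\mathbbm{1}_{(y,r)}=\sum_{s\in G}f_s(\sigma_s(y),sr)\,\mathbbm{1}_{(\sigma_s(y),sr)},$$
where the series converges in $\ell^p$. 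The points $(\sigma_s(y),sr)$ are pairwise distinct as $s$ ranges over $G$, since their second coordinates $sr$ are distinct. If the operator vanishes, we evaluate this vector at the point $(\sigma_s(y),sr)$. Point evaluation is a continuous linear functional on $\ell^p$, so it passes through the convergent sum, and this yields $f_s(\sigma_s(y),sr)=0$ for each $s$.

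Finally, as $(y,r)$ ranges over $X\times G$ for fixed $s$, the point $(\sigma_s(y),sr)$ ranges over all of $X\times G$, because $(y,r)\mapsto(\sigma_s(y),sr)$ is a bijection. Therefore $f_s=0$ for every $s\in G$.

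No step is a real obstacle. The only points needing care are that norm convergence of the operator series implies convergence of the image vectors, and that point evaluation commutes with the infinite sum; both follow from continuity. The key structural fact is that the $G$-coordinate separates the different $\lambda_s$: the images of a single point mass under different $\lambda_s$ have disjoint supports, so the coefficients cannot cancel one another.
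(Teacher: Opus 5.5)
Your proof is correct and follows essentially the same route as the paper. The paper also tests the operator on point masses: for a fixed target point $(x,t)$ it takes $\xi=\mathbbm{1}_{(\sigma_r^{-1}(x),r^{-1}t)}$, so that only the $s=r$ term survives at $(x,t)$, which is just a reparametrization of your fix-the-source-point-and-use-bijectivity argument, while your convergence details (absolute convergence in operator norm, continuity of point evaluation) fill in what the paper calls routine.
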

    \begin{proof}
        It is routine to check that $\sum_{s\in G}m_{f_s}\lambda_s$ is bounded in norm, hence it defines an element in $B(\lpxg)$. Assume that $\sum_{s\in G}m_{f_s}\lambda_s = 0$. Fix $r\in G$, and for $(x,t)\in X\times G$ take $\xi=\mathbbm{1}_{(\sigma_r^{-1}(x),r^{-1}t)}$. We have
        \begin{equation*}
            0 = \left(\sum_{s\in G}m_{f_s}\lambda_s \xi\right)(x,t) = \sum_{s\in G} f_s(x,t) \mathbbm{1}_{(\sigma_r^{-1}(x),r^{-1}t)} (\sigma_s^{-1}(x),s^{-1}t) = f_r(x,t).
		\end{equation*}
        This concludes the proof.
    \end{proof}

	The following proposition is a description of the invertible isometries of \mbox{$\lone{G}{X}$}.

	\begin{proposition}\label{unitariesOnl1}
		Let $\action$ be an action of a discrete group $G$ on a compact, Hausdorff space $X$. 
		Then
		$$\unit{\lone{G}{X}} = \{f\delta_s \colon  s\in G \text{ and } f\in C(X,\mathbb{T})\}.$$
	\end{proposition}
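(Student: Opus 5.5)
The inclusion ``$\supseteq$'' is immediate: for $s\in G$ and $f\in C(X,\mathbb T)$, the element $f\delta_s$ has norm $\|f\|_\infty=1$. Its inverse is $\delta_{s^{-1}}\bar f=\alpha_{s^{-1}}(\bar f)\delta_{s^{-1}}$, since $f\delta_s\alpha_{s^{-1}}(\bar f)\delta_{s^{-1}}=f\bar f\delta_e=1$ (and similarly on the other side). This inverse also has norm $1$.

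For ``$\subseteq$'', let $u=\sum_s f_s\delta_s\in\unit{\lone{G}{X}}$. Since $\Lambda_p$ is a contractive unital homomorphism, $\Lambda_p(u)$ is an invertible isometry of $\Lambda_p(\lone{G}{X})$ for a fixed $p\in(1,\infty)\setminus\{2\}$. By \autoref{unitariesOnLp} we can write $\Lambda_p(u)=\sum_s\eta\mathbbm 1_{X_s}\lambda_s$ with $\eta\colon X\to\T$ and $X=\bigsqcup_s X_s$. We also have $\Lambda_p(u)=\sum_s f_s\lambda_s$. Both expressions are of the form in \autoref{rmkLinearlyIndependent}, with multipliers depending only on $x$; the first has $\sum_s\|\eta\mathbbm 1_{X_s}\|_\infty$ possibly infinite, but the comparison argument in that lemma is pointwise, evaluating on $\mathbbm 1_{(\sigma_r^{-1}(x),r^{-1}t)}$, so it still applies. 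It yields $f_s=\eta\mathbbm 1_{X_s}$ for every $s$. Thus each $f_s$ is continuous with $|f_s|=\mathbbm 1_{X_s}$, so each $X_s=\{|f_s|=1\}=\{f_s\neq0\}$ is clopen. Moreover
\[
1=\|u\|=\sum_s\|f_s\|_\infty=\#\{s\colon X_s\neq\emptyset\}.
\]
Since $X\neq\emptyset$, exactly one $X_s$ is nonempty, so $X_s=X$. Then $u=f_s\delta_s$ with $f_s=\eta$ continuous and $\T$-valued.

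This is where the $\ell^1$-norm is used decisively: in $\Lp{G}{X}$ itself a clopen partition into several pieces would give a genuine invertible isometry, but the $\ell^1$-norm counts the pieces. The only delicate point is justifying the coefficient identification when the partition is infinite. To avoid that issue I would instead compute coefficients directly, as in the proof of \autoref{unitariesOnLp}, which already showed $f_{s_x}(x)=\eta(x)$ and $f_s(x)=0$ for $s\neq s_x$ by testing against point masses. That computation gives $|f_s|=\mathbbm 1_{X_s}$ without invoking \autoref{rmkLinearlyIndependent} at all.
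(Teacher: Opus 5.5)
Your proof is correct and follows the paper's route: pass to $\Lambda_p$ for $p\neq 2$, use \autoref{unitariesOnLp} to get $f_s=\eta\mathbbm{1}_{X_s}$, deduce that each $X_s$ is clopen, and let the $\ell^1$-norm count the nonempty pieces. You also do two things the paper does not: you verify the easy inclusion $\supseteq$ explicitly, and you notice that \autoref{rmkLinearlyIndependent} is invoked without its summability hypothesis being checked. Your fix for the latter, reading $f_s=\eta\mathbbm{1}_{X_s}$ directly off the point-mass computation in the proof of \autoref{unitariesOnLp}, closes that gap.
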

	\begin{proof}
		Let $u=\sum_{s\in G}f_s\delta_s$ be in $\unit{\lone{G}{X}}$. 
		Take $p\in (1,\infty)$, different from $2$, and consider the map $\Lambda_p \colon \lone{G}{X} \to \Lp{G}{X}$ as in \autoref{defLpCrossProd}, which is injective and contractive. 
        Then $\Lambda_p(\unit{\lone{G}{X}})$ is contained in \mbox{$\unit{\Lambda_p(\lone{G}{X})}$}, so it follows from \autoref{unitariesOnLp} that there are a function $f\colon X\to \T$ and a partition $\{X_s\colon s\in G \}$ of $X$ such that 
		$$ \sum_{s\in G} f \mathbbm{1}_{X_s} \lambda_s =  \Lambda_p \left(\sum_{s\in G}f_s\delta_s \right) = \sum_{s\in G} f_s \lambda_s.$$
        Using \autoref{rmkLinearlyIndependent} we get that $f_s$ equals $f \mathbbm{1}_{X_s}$ for all $s\in G$.
        Since $f_s$ is a continuous function and $|f_s|$ only takes the values $0$ and $1$, we deduce that $X_s$ must be a clopen subset of $X$, and it follows that $f$ must be continuous. 
		Thus $u = \sum_{s\in G}f\mathbbm{1}_{X_s} \delta_s$, and hence we obtain 
		$$\lVert u\rVert = \sum_{s\in G} \lVert f\mathbbm{1}_{X_s}\rVert_\infty = \# \{s\in G \colon X_s \neq \emptyset\},$$
        that is, the number of nonempty pieces $X_s$. 
		As $u$ belongs to $\unit{\lone{G}{X}}$, it must be $\lVert u\rVert=1$. 
		It follows that there is a unique $s\in G$ such that $X_s = X$, and $X_t$ is empty for all $t\in G\setminus\{s\}$. 
		In other words, we have $u = f\delta_s$, as desired.
	\end{proof}

    Next, we show how to recover the group $G$ from the Banach crossed product \mbox{$\luno$}.

    \begin{corollary}\label{recoverGrpFromAlg}
		Let $\action$ be an action of a discrete group $G$ on a compact, Hausdorff space $X$. The group $\unit{\lone{G}{X}}$ of invertible isometries of $\lone{G}{X}$ is (algebraically) isomorphic to the semidirect product $\unit{C(X)}\rtimes_\alpha G$. 
        Consequently, $G$ is isomorphic to the quotient $\unit{\lone{G}{X}}/\unit{C(X)}$.
	\end{corollary}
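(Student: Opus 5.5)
By \autoref{unitariesOnl1}, every element of $\unit{\lone{G}{X}}$ can be written as $f\delta_s$ with $f\in C(X,\T)$ and $s\in G$. The plan is to define
\[
\Phi\colon \unit{C(X)}\rtimes_\alpha G\to \unit{\lone{G}{X}},\qquad \Phi(f,s)=f\delta_s,
\]
where the semidirect product carries the multiplication $(f,s)(g,t)=(f\alpha_s(g),st)$. Here $\alpha_s$ preserves $C(X,\T)=\unit{C(X)}$, so this group is well defined.

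\emph{Homomorphism.} This follows from the multiplication rule in $\luno$:
\[
(f\delta_s)(g\delta_t)=f\alpha_s(g)\delta_{st}=\Phi\big((f,s)(g,t)\big).
\]

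\emph{Surjectivity.} This is exactly \autoref{unitariesOnl1}.

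\emph{Injectivity.} Suppose $f\delta_s=\delta_e$. Comparing coefficients in $\luno$ (elements are functions $G\to C(X)$) forces $s=e$ and $f=1$, because $f\neq 0$ since $X\neq\emptyset$ and $|f|=1$. The same coefficient comparison shows that the representation $f\delta_s$ is unique.

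Hence $\Phi$ is a group isomorphism. Under $\Phi$, the normal subgroup $\unit{C(X)}\times\{e\}$ is carried onto $\unit{C(X)}$, viewed inside $\luno$ via $\iota$. The projection $(f,s)\mapsto s$ is a surjective homomorphism onto $G$ with kernel $\unit{C(X)}\times\{e\}$. Composing with $\Phi^{-1}$ gives a surjective homomorphism $\unit{\lone{G}{X}}\to G$, namely $f\delta_s\mapsto s$, with kernel $\unit{C(X)}$. The first isomorphism theorem then yields
\[
\unit{\lone{G}{X}}/\unit{C(X)}\cong G.
\]

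There is no real obstacle, since all the analytic work is in \autoref{unitariesOnl1}. The only points needing care are the well-definedness of the map $f\delta_s\mapsto s$, which is the uniqueness of the coefficients noted above, and the nonemptiness of $X$, which is what guarantees $f\neq 0$.
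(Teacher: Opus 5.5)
Your proof is correct and matches the paper's argument: the paper likewise relies on \autoref{unitariesOnl1} and checks directly that $f\delta_s\mapsto (f,s)$, which is the inverse of your $\Phi$, is a group isomorphism onto $\unit{C(X)}\rtimes_\alpha G$. Your explicit treatment of the uniqueness of the representation $f\delta_s$, of the quotient, and of the role of $X\neq\emptyset$ fills in what the paper calls ``directly verified.''
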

	\begin{proof}
        Using the description of $\unit{\lone{G}{X}}$ obtained in \autoref{unitariesOnl1}, it is directly verified that the map $\phi\colon \unit{\lone{G}{X}}\to\unit{C(X)}\rtimes_\alpha G$ given by $\phi(f\delta_s) = (f,s)$, for all $f\delta_s\in\unit{\lone{G}{X}}$, is an isomorphism of (discrete) groups.
	\end{proof}

    \begin{remark}\label{rmkWendel}
        \autoref{recoverGrpFromAlg} implies Wendel's theorem for discrete groups. 
        To see this, simply note that, for a discrete group $G$, we can consider the trivial action $G\curvearrowright\{\ast\}$ of $G$ on the one point space $\{*\}$, and that $\ell^1(G)$ is isomorphic to the crossed product $\lone{G}{\{\ast\}}$.
    \end{remark}
	
	Recall the notion of a hermitian element from \autoref{defHerm}, and the core of a unital Banach algebra from \autoref{defCore}.

	\begin{theorem}\label{coreOfl1}
		Let $\action$ be an action of a discrete group $G$ on a compact, Hausdorff space $X$. 
		Then the space of hermitian elements of the algebra $\lone{G}{X}$ is $C(X,\R)$. 
		Consequently, the core of $\lone{G}{X}$ is $C(X).$
	\end{theorem}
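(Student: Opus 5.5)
One inclusion is immediate. If $h\in C(X,\R)$, then $e^{ith}\delta_e$ is just the function $e^{ith}$ viewed inside $\luno$. Since $\iota$ is an isometric unital homomorphism, $\lVert e^{ith}\rVert=\lVert e^{ith}\rVert_\infty=1$ for all $t\in\R$, so $h$ is hermitian.

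For the converse, I would use the criterion in \autoref{charactHermitian}(3) and estimate $\lVert 1+ita\rVert$ coefficientwise. Let $a=\sum_s a_s\delta_s$ be hermitian. Then
\[
\lVert 1+ita\rVert=\lVert 1+ita_e\rVert_\infty+|t|\sum_{s\neq e}\lVert a_s\rVert_\infty .
\]
Put $c=\sum_{s\neq e}\lVert a_s\rVert_\infty$. Dividing by $t$ and letting $t\to0^+$, \autoref{limitLemma} gives
\[
0=-\min\Im{\mathrm{co}(a_e(X))}+c .
\]
Letting $t\to0^-$ instead, the term $|t|c/t$ tends to $-c$, so
\[
0=-\max\Im{\mathrm{co}(a_e(X))}-c .
\]
Subtracting the two equations yields $\max\Im{\cdot}-\min\Im{\cdot}=-2c\le 0$. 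The left side is nonnegative, so $c=0$ and $\max\Im{\cdot}=\min\Im{\cdot}=0$. Therefore $a_s=0$ for every $s\neq e$ and $a_e$ is real-valued, so $a\in C(X,\R)$.

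The core statement follows directly. $C(X,\R)$ is closed under multiplication, so $\core{\luno}$ is defined, and it equals $C(X,\R)+iC(X,\R)=C(X)$.

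The only delicate point is the one-sided limits. \autoref{limitLemma} supplies the right- and left-hand limits for $\lVert 1+ita_e\rVert_\infty$, and $|t|/t$ contributes $\pm c$ with opposite signs. Since $a$ is hermitian, the two-sided limit exists and equals $0$, so both one-sided limits are $0$, which justifies the two equations above.
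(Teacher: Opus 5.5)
Your proof is correct and follows the paper's argument essentially step for step: you split the norm coefficientwise, take the two one-sided limits via \autoref{limitLemma}, and subtract to force $\sum_{s\neq e}\lVert a_s\rVert_\infty=0$. The only cosmetic differences are that you read off the vanishing of the imaginary part of $a_e$ directly from the two limit equations instead of invoking the hermitian elements of $C(X)$, and that you check the easy inclusion $C(X,\R)\subseteq \lone{G}{X}_\mathsf{h}$ explicitly, which the paper leaves implicit.
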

	\begin{proof}
		Let $f=\sum_{s\in G}f_s\delta_s\in \lone{G}{X}$ be a hermitian element of $\lone{G}{X}$. 
		We compute
		\begin{equation*}
			\begin{split}
				\lim_{t\to 0} \frac{1}{t}\left( \lVert 1+itf\rVert - 1\right) &= \lim_{t\to 0} \frac{1}{t}\left( \big\| (1+itf_e)\delta_e + \sum_{s\in G\setminus\{e\}}itf_s\delta_s\big\| -1\right)\\
				&=\lim_{t\to 0} \frac{1}{t}\left( \lVert 1+itf_e\rVert_\infty + \sum_{s\in G\setminus\{e\}} \lVert itf_s\rVert_\infty - 1\right)\\
				&= \lim_{t\to 0} \frac{1}{t}\left( \lVert 1+itf_e\rVert_\infty -1 \right) + \frac{|t|}{t}\left(\sum_{s\in G\setminus\{e\}}\lVert f_s\rVert_\infty \right)
			\end{split}
		\end{equation*}
		
		As $f_e$ belongs to $C(X)$, by \autoref{limitLemma} we get that
		\begin{equation}\label{eqLim1}
			\begin{split}
				\lim_{t\to 0^+} \frac{1}{t}\left( \lVert 1+itf\rVert - 1\right) = -\min\Im{co(f_e(X))} + \sum_{s\in G\setminus\{e\}} \lVert f_s\rVert_\infty.
			\end{split}
		\end{equation}
        Similarly,
		\begin{equation}\label{eqLim2}
			\begin{split}
				\lim_{t\to0^-} \frac{1}{t}\left( \lVert 1+itf\rVert - 1\right) = -\max\Im{co(f_e(X))} - \sum_{s\in G\setminus\{e\}}\lVert f_s\rVert_\infty.
			\end{split}
		\end{equation}
		Suppose that $\sum_{s\in G\setminus\{e\}}\lVert f_s\rVert_\infty > 0$.
        By \autoref{charactHermitian}, as $f$ is hermitian, we have that $\lim_{t\to 0} \frac{1}{t}(\lVert 1+itf\rVert-1)=0$. 
        Hence, combining the equalities in (2) and (3), we obtain that
        $$\max\Im{co(f_e(X))} - \min\Im{co(f_e(X))} = -2\left(\sum_{s\in G\setminus\{e\}} \lVert f_s\rVert_\infty\right).$$
        This is a contradiction, since the left-hand side of the equality is nonnegative whereas the right-hand side is strictly negative.
		Therefore, we conclude that \mbox{$\sum_{s\in G\setminus\{e\}}\lVert f_s\rVert_\infty=0$}. 
		Hence, $f = f_e\delta_e \in C(X)$, and as the embedding $C(X)\to\luno$ is isometric we have that $f_e$ is an hermitian element in $C(X)$. 
		Using that the hermitian elements of $C(X)$ are the continuous functions from $X$ to $\R$, we deduce that $f\in C(X,\mathbb{R})$.
		
		As $C(X,\R)$ is clearly closed under multiplication, it follows from \cite[Proposition 2.3]{rigidityLp} that $\core{\lone{G}{X}}$ is well defined, and is equal to $C(X,\R) + iC(X,\R)$, that is, $C(X)$.
	\end{proof}

	By \autoref{unitariesActByConj} we have that the invertible isometries of a unital Banach algebra act on its core by conjugation. 
	At this point, we know both the invertible isometries of $\lone{G}{X}$ and the core of $\lone{G}{X}$. 
	It is clear that $\unit{C(X)}$ acts trivially on $\core{\lone{G}{X}}$, as the core is equal to $C(X)$, which is commutative, and the action is by conjugation. 
	By Gelfand duality, this means that we actually have an action of the quotient group $\unit{\lone{G}{X}}/\unit{C(X)}$ on $X$.

    \begin{proposition}\label{recoverActionFromAlg}
		Let $\action$ be an action of a discrete group $G$ on a compact, Hausdorff space $X$. 
		Then the action of the quotient group $\unit{\lone{G}{X}}/\unit{C(X)}$ on $X$ and the action of $G$ on $X$ are topologically isomorphic.
	\end{proposition}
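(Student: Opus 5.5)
We first make the action of $Q=\unit{\lone{G}{X}}/\unit{C(X)}$ on $X$ explicit and then compare it with $G\curvearrowright X$ through the isomorphism of \autoref{recoverGrpFromAlg}.

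\textbf{The action of $Q$.} By \autoref{coreOfl1}, $\core{\lone{G}{X}}=C(X)$. By \autoref{unitariesActByConj}, each $u\in\unit{\lone{G}{X}}$ acts on $C(X)$ by $a\mapsto uau^{-1}$. This map preserves $C(X,\R)=\lone{G}{X}_\mathsf{h}$. It is therefore complex linear and multiplicative on $C(X)=C(X,\R)+iC(X,\R)$. It is also unital and isometric. So it is a $*$-automorphism of $C(X)$, and by Gelfand duality there is a homeomorphism $\beta_u$ of $X$ with $uau^{-1}=a\circ\beta_u^{-1}$. Since $C(X)$ is commutative, $\unit{C(X)}$ acts trivially, so $\beta_u$ depends only on the class $[u]\in Q$. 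The assignment $[u]\mapsto\beta_u$ is a homomorphism because conjugation is multiplicative and Gelfand duality is functorial. This is precisely the action of $Q$ on $X$ in the statement.

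\textbf{Comparison with $G$.} Let $\psi\colon G\to Q$ be $\psi(s)=[\delta_s]$. By \autoref{unitariesOnl1} and \autoref{recoverGrpFromAlg}, every element of $\unit{\lone{G}{X}}$ is of the form $f\delta_s$ with $f\in C(X,\T)$. The map $f\delta_s\mapsto s$ is a homomorphism with kernel $\unit{C(X)}$, so $\psi$ is a group isomorphism. For the spaces we take $\theta=\mathrm{id}_X$. \autoref{implAction} gives $\delta_s a\delta_s^{-1}=\alpha_s(a)=a\circ\sigma_s^{-1}$ for $a\in C(X)$. By the uniqueness in Gelfand duality, $\beta_{\delta_s}=\sigma_s$. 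Hence $\theta\circ\sigma_s=\beta_{\psi(s)}\circ\theta$ for all $s\in G$, which is topological isomorphism in the sense of \autoref{isomTopSyst}.

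\textbf{Main obstacle.} The only delicate point is well-definedness and functoriality, that is, making the identification of the core intrinsic. The core must be $C(X)$ as an algebra, and conjugation must induce an automorphism of it. Both facts already follow from \autoref{coreOfl1} and \autoref{unitariesActByConj}. We should still check the convention: $uau^{-1}=a\circ\beta_u^{-1}$ yields a left action, which is consistent with $\alpha_s(a)=a\circ\sigma_s^{-1}$. With that checked, the rest is bookkeeping.
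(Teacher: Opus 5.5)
Your proof is correct and is essentially the paper's argument: you identify the quotient with $G$ via $s\mapsto[\delta_s]$ using \autoref{recoverGrpFromAlg}, take $\theta=\mathrm{id}_X$, and use $\delta_s f\delta_s^{-1}=\alpha_s(f)$ from \autoref{implAction}; your Gelfand-duality paragraph makes explicit what the paper only sketches before the proposition. One harmless wording slip: conjugation is automatically linear and multiplicative, so what preservation of hermitian elements actually gives you is that it maps $C(X)$ onto itself and commutes with the involution.
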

	\begin{proof}
		For all $s\in G$, we have that $\delta_s$ belongs to $\unit{\lone{G}{X}}$, and we denote its image in the quotient group $\unit{\lone{G}{X}}/\unit{C(X)}$ by $\overline{\delta}_s$. 
        It follows from \autoref{recoverGrpFromAlg} that every element of $\unit{\lone{G}{X}}/\unit{C(X)}$ is represented by a unique element of the form $\overline{\delta}_s$, and that the map $\overline{\delta}$ from $G$ to $\unit{\lone{G}{X}}/\unit{C(X)}$ given by $s\mapsto\overline{\delta}_s$, for all $s\in G$, is an isomorphism. 
		Moreover, for all $f\in C(X)$, using \autoref{implAction} at the last step, we have that 
        $$\overline{\delta}_s\cdot f = \delta_s f \delta_s^{-1} = \alpha_s(f).$$
        We conclude that the actions are isomorphic, as desired.
	\end{proof}

    We are now ready prove the main result of this paper, in which we describe all isometric isomorphisms between $\ell^1$-crossed products using 1-cocycles and topological isomorphisms. 

    \begin{theorem}\label{mainThm}
		Let $G\curvearrowright X$ and $H\curvearrowright Y$ be actions of discrete groups $G$ and $H$ on compact Hausdorff spaces $X$ and $Y$, and let $G\curvearrowright^\alpha C(X)$ and \mbox{$H\curvearrowright^\beta C(Y)$} be the respective induced actions. 
		Let \mbox{$\Phi \colon \lone{G}{X}\to\lone{H}{Y}$} be a linear map.
        Then $\Phi$ is an isometric isomorphism if and only if
        there are 
        \begin{enumerate}
            \item a topological isomorphism $(\theta,\psi)$ between $G\curvearrowright X$ and $H\curvearrowright Y$, as in \autoref{isomTopSyst}, and
            \item a map $\omega\colon G\to C(Y,\T)$ satisfying
            \begin{enumerate}
                \item $\omega_e = 1$ and
                \item $\omega_{st} = \omega_s \beta_{\psi(s)} (\omega_t)$ for all $s,t\in G$,
            \end{enumerate}\end{enumerate}
        such that for all $f\in C(X)$ and $s\in G$ we have
           \[\tag{4.2}\label{eq:Phi}\Phi(f\delta_s) = (f\circ\theta^{-1})\omega_s\delta_{\psi(s)}.\]
    \end{theorem}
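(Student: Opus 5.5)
We prove the two implications separately. The ``if'' direction is a direct computation. The ``only if'' direction rests on the intrinsic descriptions of hermitian elements (\autoref{coreOfl1}) and of invertible isometries (\autoref{unitariesOnl1}), both of which any isometric isomorphism must preserve.

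For the ``if'' direction, let $(\theta,\psi)$ and $\omega$ be given and define $\Phi$ on finitely supported elements by (4.2). We extend it to $\lone{G}{X}$ by continuity. This is legitimate because
\[\|(f\circ\theta^{-1})\omega_s\|_\infty=\|f\|_\infty,\]
so $\Phi$ is isometric on each summand and therefore isometric on $\ell^1$-sums, since distinct $s$ go to distinct $\psi(s)$. For multiplicativity we compare
\[\Phi(f\delta_s)\Phi(g\delta_t)=(f\circ\theta^{-1})\,\omega_s\,\beta_{\psi(s)}\bigl((g\circ\theta^{-1})\,\omega_t\bigr)\,\delta_{\psi(st)}\]
with $\Phi(f\alpha_s(g)\delta_{st})$. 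These agree using the cocycle identity together with
\[\beta_{\psi(s)}(g\circ\theta^{-1})=\alpha_s(g)\circ\theta^{-1},\]
which is equivalent to $\theta\circ\sigma_s=\tau_{\psi(s)}\circ\theta$. Surjectivity follows because $\Phi$ is isometric with dense range: the image contains every $h\,\delta_r$, as $\psi$ and $\theta$ are bijective and $\omega$ is unimodular. Finally, $\omega_e=1$ gives unitality.

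For the ``only if'' direction, let $\Phi$ be an isometric isomorphism. Since $\|1\|=1$ and the unit is intrinsic, $\Phi$ is unital, and $\Phi^{-1}$ is also isometric. Both hermitian elements and invertible isometries are defined purely in terms of the norm and the algebra structure, so $\Phi$ preserves them. By \autoref{coreOfl1}, $\Phi$ therefore maps $C(X,\R)$ onto $C(Y,\R)$, and hence $C(X)$ onto $C(Y)$ as a unital algebra isomorphism. Gelfand duality then yields a homeomorphism $\theta\colon X\to Y$ with $\Phi(f)=f\circ\theta^{-1}$.

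Next, \autoref{unitariesOnl1} gives, for each $s\in G$, a unique $\psi(s)\in H$ and a unique $\omega_s\in C(Y,\T)$ with $\Phi(\delta_s)=\omega_s\delta_{\psi(s)}$. Comparing $\Phi(\delta_{st})=\Phi(\delta_s)\Phi(\delta_t)$ in this form yields two things: $\psi$ is a homomorphism, and $\omega$ satisfies the cocycle identity. Evaluating at $s=e$ gives $\omega_e=1$ and $\psi(e)=e$. Applying the same argument to $\Phi^{-1}$ shows $\psi$ is bijective. (Alternatively, $\Phi$ induces an isomorphism of the quotient groups $\unit{\cdot}/\unit{C(\cdot)}$ of \autoref{recoverGrpFromAlg}.)

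Multiplicativity now gives the formula
\[\Phi(f\delta_s)=\Phi(f)\Phi(\delta_s)=(f\circ\theta^{-1})\,\omega_s\,\delta_{\psi(s)},\]
which is (4.2). To get equivariance, apply $\Phi$ to the relation $\delta_s f\delta_s^{-1}=\alpha_s(f)$ from \autoref{implAction}. The left side becomes
\[\omega_s\,\delta_{\psi(s)}\,(f\circ\theta^{-1})\,\delta_{\psi(s)}^{-1}\,\omega_s^{-1}=\beta_{\psi(s)}(f\circ\theta^{-1}),\]
where the unimodular factors cancel because $C(Y)$ is commutative. The right side becomes $\alpha_s(f)\circ\theta^{-1}$. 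Equating these for all $f$ yields $\theta\circ\sigma_s=\tau_{\psi(s)}\circ\theta$.

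The main difficulty lies in the already-proved structural results, namely the descriptions of the invertible isometries and of the hermitian elements. Given those, the only remaining care point is checking that $\Phi$ preserves these intrinsic subsets, which requires that both $\Phi$ and $\Phi^{-1}$ are isometric and unital.
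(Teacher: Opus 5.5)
Your proof is correct and follows essentially the same route as the paper. You recover $C(X)$ via the hermitian elements and Gelfand duality, read off $\psi$ and $\omega$ from the description of invertible isometries, get the cocycle identity and equivariance from multiplicativity and the relation $\delta_s f\delta_s^{-1}=\alpha_s(f)$, and check the converse directly. You are in fact slightly more careful than the paper about why $\psi$ is bijective, and about extending (4.2) from finitely supported elements by continuity; this last point is implicitly needed, since (4.2) only determines $\Phi$ on the algebraic crossed product.
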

    \begin{proof}
        Assume that $\Phi$ is an isometric isomorphism. 
        Then $\Phi$ restricts to an isomorphism between $\core{\lone{G}{X}}$ and $\core{\lone{H}{Y}}$ which, by \autoref{coreOfl1}, yields an isometric isomorphism between $C(X)$ and $C(Y)$. 
        By Gelfand duality, there is a homeomorphism $\theta\colon X\to Y$ such that $\Phi(f) = f\circ\theta^{-1}$ for all $f\in C(X)$.
        Since $\Phi$ is isometric, it induces a group isomorphism between $\unit{\lone{G}{X}}$ and $\unit{\lone{H}{Y}}$ which by \autoref{recoverGrpFromAlg} induces an isomorphism $\psi\colon G\to H$ and a map $\omega\colon G\to C(Y,\T)$ such that 
        $$\Phi(f\delta_s) = (f\circ\theta^{-1})\omega_s\delta_{\psi(s)}$$ 
        for all $f\in C(X)$ and $s\in G$, showing (4.1)
        It is clear that $\omega_e = 1$. Using
        that $\Phi$ is a homomorphism, for $s,t\in G$ we have 
        $$\omega_{st}\delta_{\psi(st)} = \Phi(\delta_{st}) = \Phi(\delta_s)\Phi(\delta_t) = \omega_s\delta_{\psi(s)}\omega_{t}\delta_{\psi(t)} = \omega_s \beta_{\psi(s)}(\omega_t) \delta_{\psi(st)},$$
        hence $\omega_{st} = \omega_s \beta_{\psi(s)}(\omega_t)$.
        This proves that (2) holds.
        To verify (1), let $f\in C(X)$ and $s\in G$. Then
        \begin{equation*}
            \begin{split}
                \alpha_s(f)\circ\theta^{-1} &= \Phi(\alpha_s(f)) = \Phi(\delta_s f \delta_s^{-1}) =\Phi(\delta_s)\Phi(f)\Phi(\delta_s^{-1})\\
                &= (\omega_s\delta_{\psi(s)})(f\circ\theta^{-1})(\omega_{s^{-1}}\delta_{\psi(s)^{-1}})\\
                &=(\omega_s\beta_{\psi(s)}(f\circ\theta^{-1})\delta_{\psi(s)})(\omega_{s^{-1}}\delta_{\psi(s)^{-1}})\\
                &=\omega_s\beta_{\psi(s)}(f\circ\theta^{-1})\beta_{\psi(s)}(\omega_{s^{-1}}) \delta_e\\
                &=\beta_{\psi(s)}(f\circ\theta^{-1}) \omega_s\beta_{\psi(s)}(\omega_{s^{-1}})\\
                &=\beta_{\psi(s)}(f\circ\theta^{-1})\omega_e = \beta_{\psi(s)}(f\circ\theta^{-1}).
            \end{split}
        \end{equation*}
       This shows that $(\theta,\psi)$ is a topological isomorphism between $G\curvearrowright X$ and $H\curvearrowright Y$, as desired. 

        For the converse, assume that $(\theta,\psi)$ and $\omega$ as in the statement are given, and that $\Phi$ satisfies (4.1). 
        Then $\Phi$ is clearly isometric and bijective, and its inverse is obtained from $(\theta^{-1},\psi^{-1})$ and the conjugate of $\omega$. Let us verify that $\Phi$ is a homomorphism. Let $f,g\in C(X)$ and $s,t\in G$. We have
        $$\Phi((f\delta_s)(g\delta_t)) = \Phi(f\alpha_s(g)\delta_{st}) = (f\circ\theta^{-1}) (\alpha_s(g) \circ \theta^{-1}) \omega_{st}\delta_{\psi(st)}.$$
        On the other hand,
        \begin{equation*}
            \begin{split}
                \Phi(f\delta_s)\Phi(g\delta_t) &= ((f\circ\theta^{-1})\omega_s\delta_{\psi(s)})((g\circ\theta^{-1})\omega_t\delta_{\psi(t)})\\
                &= (f\circ\theta^{-1})\omega_s (\beta_{\psi(s)}((g\circ\theta^{-1}) \omega_t))\delta_{\psi(s)\psi(t)}\\
                &=(f\circ\theta^{-1})(\beta_{\psi(s)}(g\circ\theta^{-1})) \omega_s\beta_{\psi(s)}(\omega_t)\delta_{\psi(st)}.
            \end{split}
        \end{equation*}
        Using (1), we have $\beta_{\psi(s)}(g\circ\theta^{-1}) = \alpha_s(g)\circ\theta^{-1}$, and from (2) we obtain $\omega_s\beta_{\psi(s)}(\omega_t) = \omega_{st}$. Thus
        $$\Phi(f\delta_s)\Phi(g\delta_t) = (f\circ\theta^{-1})(\alpha_s(g)\circ\theta^{-1})\omega_{st}\delta_{\psi(st)}.$$
        Hence $\Phi(f\delta_s)\Phi(g\delta_t) = \Phi((f\delta_s)(g\delta_t))$. This finishes the proof.
    \end{proof}

We isolate the main consequence of \autoref{mainThm}.

    \begin{corollary}\label{CorEquiv}
        Let $G\curvearrowright X$ and $H\curvearrowright Y$ be actions of discrete groups $G$ and $H$ on compact Hausdorff spaces $X$ and $Y$, respectively. Then the following are equivalent:
        \begin{enumerate}\item[(i)] $\ell^1(G\curvearrowright X)$ is isometrically isomorphic to $\ell^1(H\curvearrowright Y)$;
        	\item[(ii)] $G\curvearrowright X$ is topologically isomorphic to $H\curvearrowright Y$.\end{enumerate}  
    \end{corollary}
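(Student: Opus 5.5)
The corollary follows directly from \autoref{mainThm}, and the plan is to prove the two implications separately.

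For (i)$\Rightarrow$(ii), fix an isometric isomorphism $\Phi\colon \lone{G}{X}\to\lone{H}{Y}$ of Banach algebras. It is in particular a linear map satisfying the hypothesis of \autoref{mainThm}, so the forward direction of that theorem yields a topological isomorphism $(\theta,\psi)$ between $G\curvearrowright X$ and $H\curvearrowright Y$. This is exactly (ii); the cocycle $\omega$ is simply discarded.

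For (ii)$\Rightarrow$(i), let $(\theta,\psi)$ be a topological isomorphism and take the trivial cocycle $\omega_s=1$ for all $s\in G$. Conditions (a) and (b) of \autoref{mainThm} hold trivially. Define $\Phi$ on finitely supported elements by
\[\Phi\Big(\sum_{s} f_s\delta_s\Big)=\sum_s (f_s\circ\theta^{-1})\delta_{\psi(s)}.\]
Since $f\mapsto f\circ\theta^{-1}$ is isometric for the supremum norm and $\psi$ is a bijection, $\Phi$ preserves the $\ell^1$-norm $\sum_s\|f_s\|_\infty$. It therefore extends to an isometric linear map on all of $\lone{G}{X}$; equivalently, the defining formula converges absolutely for every element. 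The same construction applied to $(\theta^{-1},\psi^{-1})$ gives an inverse, so $\Phi$ is bijective. By construction $\Phi$ satisfies \eqref{eq:Phi}, and the converse direction of \autoref{mainThm} shows that $\Phi$ is an isometric isomorphism.

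The only point needing care is that \autoref{mainThm} specifies $\Phi$ only on elementary tensors $f\delta_s$. Multiplicativity on all of $\lone{G}{X}$ then follows from bilinearity of the product together with continuity and density of the span of the elements $f\delta_s$, the same routine extension already implicit in the proof of \autoref{mainThm}. Nothing here is a genuine obstacle; the substantive work was done in \autoref{unitariesOnl1} and \autoref{coreOfl1}.
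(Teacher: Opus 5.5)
Your proposal is correct and matches the paper, which states the corollary as an immediate consequence of \autoref{mainThm}: the forward direction gives (i)$\Rightarrow$(ii), and the converse direction with the trivial cocycle $\omega\equiv 1$ gives (ii)$\Rightarrow$(i). \autoref{mainThm} only prescribes $\Phi$ on the elements $f\delta_s$, so your explicit definition of $\Phi$ through the absolutely convergent series, with multiplicativity extended by density, is a welcome bit of extra care.
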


    \section{Measurable and measure-preserving actions}

    In this section, we introduce the $\ell^1$-crossed product $\lone{G}{(X,\mu)}$ associated to a measure-class-preserving action $\actionof{G}{(X,\mu)}$. We assume throughout this section that the groups acting are countable. 
    We prove rigidity results for actions on standard spaces (see \autoref{mainThmMeasurable}) and for probability-measure-preserving actions on standard probability spaces (see \autoref{thmIsomMeasure}).
    
    We recall that, for measure spaces $(X,\mu)$ and $(Y,\nu)$, a measurable map $S\colon X \to Y$ is \textit{measure-class-preserving} if
    $$\mu(S^{-1}(E)) = 0 \text{ if and only if } \nu(E) = 0,$$
    for every measurable subset $E\subseteq Y$.  A measure-class-preserving map is said to be \textit{invertible} if it is invertible as a map $X\to Y$ and its inverse is also measurable. 
    For an invertible measure-class-preserving map, its inverse is automatically measure-class-preserving.

    Let $(X,\mu)$ be a measure space and let $G\curvearrowright(X,\mu)$ be a measure-class-preserving action. 
    For $s\in G$, let $\sigma_s\colon X\to X$ be the associated map. 
    Then $G\curvearrowright (X,\mu)$ induces an action on the von Neumann algebra $\Linfty$ using duality.

    \begin{definition}
        Let $\actionof{G}{(X,\mu)}$ be a measure-class-preserving action of a countable, discrete group $G$ on a measure space $(X,\mu)$. 
        Set
        $$\lunom = \left\{ f\colon G \to \Linfty \text{ with } \sum_{s\in G} \lVert f(s)\rVert_\infty < \infty \right\}.$$
        When endowed with the norm $\lVert f \rVert = \sum_{s\in G}\lVert f(s)\rVert_\infty$, for $f\in \lunom$, it follows that $\lunom$ is a Banach space. We define a product in $\lunom$ the following way: for $f,g\in\lunom$ and $s\in G$, we set 
        $$(fg)(s) = \sum_{t\in G} f(t)\alpha_t(g(t^{-1}s)).$$
        With this structure $\lunom$ is a Banach algebra. 
    \end{definition}

    \begin{definition}\label{isomMeasurableSyst}
        We say that two actions $G\curvearrowright^\sigma(X,\mu)$ and $H\curvearrowright^\tau(Y,\nu)$ are \textit{measurably isomorphic} if there are
        \begin{enumerate}[(1)]
            \item a $\mu$-null, $G$-invariant subset $M\subseteq X$,
            \item a $\nu$-null, $H$-invariant subset $N\subseteq Y$,
            \item an invertible measure-class-preserving map $\theta\colon X\setminus M\to Y\setminus N$, and
            \item a group isomorphism $\psi\colon G\to H$
        \end{enumerate}
        such that $\theta(\sigma_s(x)) = \tau_{\psi(s)}(\theta (x))$ for all $x\in X\setminus M$ and all $s\in G$.
    \end{definition}

    In what follows, by a \textit{measurable isomorphism} between measurable spaces $X$ and $Y$ we mean a measurable map $f\colon X\to Y$ that is bijective and its inverse is also measurable. 
    Two measure spaces $(X,\mu)$ and $(Y,\nu)$ are \textit{measurably isomorphic} if there is a measurable isomorphism $f\colon X\to Y$ between them. 
    
    We recall that a \textit{Polish space} is a separable completely metrizable topological space. 
    \begin{definition}\label{stdMeasure}
        A \textit{standard measure space} is a measure space $(X,\mu)$ such that $\mu$ is a $\sigma$-finite measure and there is a measurable subset $E\subseteq X$, with $\mu(X\setminus E) = 0$, which is measurably isomorphic to a measurable subset of a Polish space (endowed with the relative measurable structure).
    \end{definition}

    We note that if $G\curvearrowright (X,\mu)$ is an action and $N\subseteq X$ is a $\mu$-null subset, then the $G$-invariant subset $ G\cdot N =\bigcup_ {s\in G}\sigma_s(N)$ is again $\mu$-null, because the action is measure-class-preserving and $G$ is countable.

    Not surprisingly, isomorphism for measurable systems can be characterized in terms of the algebras of essentially bounded functions. 
    We include the details for the convenience of the reader, since they show why one needs to discard sets of measure zero. 
    \begin{lemma}\label{measurableIsomOnAlg}
        Let $G\curvearrowright^\sigma (X,\mu)$ and $H\curvearrowright^\tau (Y,\nu)$ be measure-class-preserving actions of countable, discrete groups $G$ and $H$ on standard measure spaces $(X,\mu)$ and $(Y,\nu)$, respectively. 
        Let \mbox{$G\curvearrowright^\alpha \Linfty$} and $H\curvearrowright^\beta L^\infty(Y,\nu)$ be the induced actions. 
        Then $G\curvearrowright^\sigma (X,\mu)$ and $H\curvearrowright^\tau(Y,\nu)$ are measurably isomorphic if and only if there are a group isomorphism $\psi\colon G\to H$ and an isomorphism $\varphi\colon\Linfty\to L^\infty(Y,\nu)$ such that $\varphi\circ\alpha_s = \beta_{\psi(s)}\circ\varphi$ for all $s\in G$.
    \end{lemma}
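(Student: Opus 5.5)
The forward direction is a direct computation. Given $M,N,\theta,\psi$ as in \autoref{isomMeasurableSyst}, I would define $\varphi\colon\Linfty\to L^\infty(Y,\nu)$ by $\varphi([f]) = [f\circ\theta^{-1}]$. Here $f$ is first restricted to $X\setminus M$, and the result is extended arbitrarily, say by $0$, on $N$.

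The steps are routine:
\begin{enumerate}
\item Since $\theta$ and $\theta^{-1}$ are measurable and measure-class-preserving, $\varphi$ is well defined on classes. It is a unital $*$-isomorphism, with inverse given by composing with $\theta$.
\item The space $X\setminus M$ is $G$-invariant. On it, $\theta\circ\sigma_s^{-1} = \tau_{\psi(s)}^{-1}\circ\theta$ holds, which follows from the intertwining relation applied to $s^{-1}$.
\item From this, for every $f$,
\[\varphi(\alpha_s f) = f\circ\sigma_s^{-1}\circ\theta^{-1} = f\circ\theta^{-1}\circ\tau_{\psi(s)}^{-1} = \beta_{\psi(s)}(\varphi(f))\]
almost everywhere, using that $Y\setminus N$ is $H$-invariant.
\end{enumerate}

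The converse is where the work lies, and it needs standardness. Given $\varphi$ and $\psi$, I would use the classical fact that a $*$-isomorphism $L^\infty(X,\mu)\to L^\infty(Y,\nu)$ between standard $\sigma$-finite spaces is implemented by a point map. Equivalently, a measure-algebra isomorphism between standard measure spaces comes from an invertible measure-class-preserving map $\theta_0\colon X\setminus M_0\to Y\setminus N_0$ with $M_0,N_0$ null, such that $\varphi(f) = f\circ\theta_0^{-1}$ almost everywhere. Via \autoref{stdMeasure}, this reduces to the Polish (standard Borel) case, which is von Neumann's theorem on point realizations of Boolean $\sigma$-isomorphisms.

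Next, for each $s\in G$, the equivariance $\varphi\circ\alpha_s=\beta_{\psi(s)}\circ\varphi$ says that the two point maps $\theta_0\circ\sigma_s$ and $\tau_{\psi(s)}\circ\theta_0$ induce the same homomorphism on $L^\infty$. I would fix a countable separating family of Borel sets in $Y$, which exists by standardness. Two measurable maps into $Y$ inducing the same map on measure classes then agree off a null set $Z_s\subseteq X$.

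Since $G$ is countable, $Z=\bigcup_s Z_s\cup M_0\cup\theta_0^{-1}$-preimage data is null. I would then set
\[M = G\cdot\bigl(Z\cup M_0\cup\theta_0^{-1}(H\cdot N_0\cap(Y\setminus N_0))\bigr),\]
which is $G$-invariant and null by the remark preceding the lemma. Take $N=\theta_0(M\setminus M_0)\cup N_0$ and saturate it under $H$.

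The main obstacle is checking that $\theta=\theta_0|_{X\setminus M}$ maps onto $Y\setminus N$ with $N$ exactly $H$-invariant. To handle this, I would enlarge $M$ and $N$ alternately, $M\mapsto G\cdot(M\cup\theta_0^{-1}(H\cdot\theta_0(M)))$ and symmetrically for $N$. Because $\theta_0$ intertwines the actions off $Z$, the $H$-saturation of $\theta_0(M)$ equals $\theta_0$ of the $G$-saturation of $M$ up to the already removed set, so one round of saturation should stabilize. Countability of $G$ and $H$ keeps every set null. The intertwining identity then holds pointwise on $X\setminus M$, giving \autoref{isomMeasurableSyst}.
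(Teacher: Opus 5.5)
Your proposal is correct and follows the paper's route. The paper likewise realizes $\varphi$ by a point isomorphism $\theta_0$ off null sets, citing Williams' Corollary I.38. It then discards the null sets where $\theta_0\circ\sigma_s=\tau_{\psi(s)}\circ\theta_0$ fails and removes their $G$-saturation.

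Your alternating saturation is more than needed. With $M=G\cdot(M_0\cup\bigcup_s Z_s)$, pointwise intertwining on the $G$-invariant set $X\setminus M$ already makes $\theta_0(X\setminus M)$ $H$-invariant. So $N=Y\setminus\theta_0(X\setminus M)$ works directly. The paper asserts this invariance without comment, and your separating-family argument supplies the detail it omits on why each $Z_s$ is null.
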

    \begin{proof}
        If $G\curvearrowright^\sigma (X,\mu)$ and $H\curvearrowright^\tau (Y,\nu)$ are measurably isomorphic, there are a group isomorphism $\psi\colon G \to H$, subsets $M\subseteq X$, $N\subseteq Y$ and a measure-class-preserving map $\theta\colon X\setminus M\to Y\setminus N$ as in \autoref{isomMeasurableSyst}. 
        For $f\in\Linfty$, we define $\varphi(f)(y) = (f \circ \theta^{-1})(y)$ for all $y\not\in N$. 
        As $\nu(N) = 0$ and $\theta$ is measure-class-preserving, this defines a map $\varphi \colon \Linfty\to L^\infty(Y,\nu)$, which is an isomorphism given that $\mu(M) = 0$. 
        It is routine to check that $\varphi\circ\alpha_s = \beta_{\psi(s)}\circ\varphi$ for all $s\in G$.

        For the converse, use \cite[Corollary I.38]{CrossedProductsDPWilliams} to find a $\mu$-null set $M_0\subseteq X$, a $\nu$-null set $N_0\subseteq Y$, and a measurable isomorphism $\theta_0\colon X\setminus M_0\to Y\setminus N_0$ such that $\varphi(f) = f\circ \theta_0^{-1}$ for all $f\in \Linfty$. 
        For $s\in G$, let $E_s$ be the set of points in $X$ where the equality $\theta_0 \circ \sigma_s = \tau_{\psi(s)}\circ\theta_0$ fails. 
        Since $\varphi \circ \alpha_s = \beta_{\psi(s)}\circ\varphi$, each $E_s$ is a $\mu$-null set and, as $G$ is countable, the union of the $E_s$ is again $\mu$-null. 
        Let $M$ be the smallest $G$-invariant, $\mu$-null set containing $E_s$ and $M_0$. 
        Finally, let $N$ be the $H$-invariant, $\nu$-null subset of $Y$ such that $\theta_0$ restricts to a bijection $\theta\colon X\setminus M \to Y\setminus N$. 
        Then $\theta$ is an invertible measure-class-preserving map satisfying $\theta\circ \sigma_s = \tau_{\psi(s)}\circ \theta$ for all $s\in G$.
    \end{proof}

    We will need the following lemma both in the measure-class-preserving and in the probability-measure-preserving settings.

    \begin{lemma}\label{lemaMeasureIsom}
        Let $G\curvearrowright (X,\mu)$ and $H\curvearrowright (Y,\nu)$ be measure-class-preserving actions of countable, discrete groups $G$ and $H$ on standard measure spaces $(X,\mu)$ and $(Y,\nu)$, respectively, and let \mbox{$G\curvearrowright^\alpha \Linfty$} and $H\curvearrowright^\beta L^\infty(Y,\nu)$ be the induced actions. 
        Suppose that $\Phi\colon\lone{G}{(X,\mu)}\to\lone{H}{(Y,\nu)}$ is an isometric isomorphism. 
        Then $\varphi = \Phi|_{\Linfty}$ is an isomorphism between $\Linfty$ and $L^\infty(Y,\nu)$, and there exists a group isomorphism $\psi\colon G\to H$ such that $\varphi \circ \alpha_s = \beta_{\psi(s)}\circ\varphi$ for all $s\in G$.
    \end{lemma}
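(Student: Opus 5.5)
The plan is to reduce to the topological case via Gelfand duality. Since $\Linfty$ is a commutative unital $C^*$-algebra, Gelfand duality gives $\Linfty\cong C(\Omega_X)$ for its (extremally disconnected) compact Hausdorff spectrum $\Omega_X$; likewise $L^\infty(Y,\nu)\cong C(\Omega_Y)$. The action $\alpha$ of $G$ on $\Linfty$ by $*$-automorphisms induces an action $G\curvearrowright\Omega_X$ by homeomorphisms, and the corresponding action on $C(\Omega_X)$ is $\alpha$ transported along the Gelfand isomorphism. Let $\Gamma_X\colon\Linfty\to C(\Omega_X)$ be the Gelfand transform. It is an isometric $*$-isomorphism and intertwines the actions. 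Applying it coefficientwise gives a map $\sum f_s\delta_s\mapsto\sum\Gamma_X(f_s)\delta_s$. This map is an isometric isomorphism of Banach algebras $\lone{G}{(X,\mu)}\cong\lone{G}{\Omega_X}$, since both the norm and the product formula only involve the sup norms of the coefficients and the actions.

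Next, set $\widetilde\Phi=\Gamma_Y\circ\Phi\circ\Gamma_X^{-1}$, understood coefficientwise. This is an isometric isomorphism $\lone{G}{\Omega_X}\to\lone{H}{\Omega_Y}$. By \autoref{mainThm}, there are a homeomorphism $\theta\colon\Omega_X\to\Omega_Y$, a group isomorphism $\psi\colon G\to H$ with $\theta\circ\sigma_s=\tau_{\psi(s)}\circ\theta$, and a cocycle $\omega$ such that $\widetilde\Phi(f\delta_e)=(f\circ\theta^{-1})\omega_e\delta_e=f\circ\theta^{-1}$. In particular $\widetilde\Phi$ maps $C(\Omega_X)$ onto $C(\Omega_Y)$. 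Equivalently, and more intrinsically, $\Phi$ maps $\core{\lone{G}{(X,\mu)}}=\Linfty$ onto $\core{\lone{H}{(Y,\nu)}}$ by \autoref{coreOfl1}. Hence $\varphi=\Phi|_{\Linfty}$ is an isometric unital algebra isomorphism onto $L^\infty(Y,\nu)$; being composition with $\theta^{-1}$ after Gelfand transforms, it is a $*$-isomorphism.

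For the intertwining relation, I can either read it off from \autoref{mainThm} or argue directly. Directly: $\delta_s\in\unit{\lone{G}{(X,\mu)}}$, so $\Phi(\delta_s)$ is an invertible isometry. By \autoref{unitariesOnl1} applied on $\Omega_Y$, it equals $\omega_s\delta_{\psi(s)}$ for a unique $\psi(s)\in H$ and some $\omega_s\in\unit{L^\infty(Y,\nu)}$. By \autoref{recoverGrpFromAlg}, $s\mapsto\psi(s)$ is a group isomorphism, because $\Phi$ induces an isomorphism of the quotients $\unit{\cdot}/\unit{L^\infty}$. Then the computation in the proof of \autoref{mainThm} applies verbatim:
\[
\varphi(\alpha_s(f))=\Phi(\delta_s f\delta_s^{-1})=\omega_s\beta_{\psi(s)}(\varphi(f))\beta_{\psi(s)}(\omega_{s^{-1}}).
\]
The unitaries cancel by commutativity and the cocycle identity, giving $\varphi\circ\alpha_s=\beta_{\psi(s)}\circ\varphi$.

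The only step needing care is the transport to the spectrum. I must check that the $\ell^1$-crossed product with $L^\infty$ coefficients is literally isometrically isomorphic to the one with $C(\Omega_X)$ coefficients, so that \autoref{unitariesOnl1} and \autoref{coreOfl1} apply. This holds because the $L^\infty$ norm equals the sup norm of the Gelfand transform, and $\alpha_s$ is transported to composition with a homeomorphism. No countability or standardness of $(X,\mu)$ is needed for this lemma; those hypotheses enter only later, via \autoref{measurableIsomOnAlg}.
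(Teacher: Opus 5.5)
Your proposal is correct and follows the same route as the paper. Like the paper, you transport both crossed products coefficientwise along the Gelfand transforms to $\ell^1$-crossed products over the spectra, apply \autoref{mainThm} to the conjugated isometric isomorphism, and pull the conclusions back; your alternative direct derivation of the intertwining relation simply repeats the corresponding computation from the proof of \autoref{mainThm}.
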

    \begin{proof}
        Let $\widehat{X}$ and $\widehat{Y}$ be the Gelfand spectra of $\Linfty$ and $L^\infty(Y,\nu)$, respectively,
        and denote by $\G_X\colon \Linfty\to C(\widehat{X})$ and $\G_Y\colon L^\infty(Y,\nu)\to C(\widehat{Y})$ the respective Gelfand transforms. 
        Let $G\curvearrowright^{\widehat\alpha}C(\widehat{X})$ and $H\curvearrowright^{\widehat\beta}C(\widehat{Y})$ denote the induced actions. 
        
        Let $\Phi_X\colon \lone{G}{(X,\mu)}\to\lone{G}{\widehat{X}}$ be the map given by $\Phi_X(f\delta_s) = \G_X(f)\delta_s$ for all $f\in\Linfty$ and $s\in G$.
        Similarly, let $\Phi_Y\colon \lone{H}{(Y,\nu)}\to\lone{H}{\widehat{Y}}$ be the map given by $\Phi_Y(g\delta_t) = \G_Y(g)\delta_t$ for all $g\in L^\infty(Y,\nu)$ and $t\in H$. 
        It is clear that both $\Phi_X$ and $\Phi_Y$ are isometric isomorphisms, hence the map $\widehat{\Phi}\colon\lone{G}{\widehat{X}}\to\lone{H}{\widehat{Y}}$ defined as $\widehat{\Phi} = \Phi_Y\circ\Phi\circ\Phi_X^{-1}$ is an isometric isomorphism. 
        By \autoref{mainThm}, the restriction $\widehat\varphi = \Phi|_{C(\widehat{X})}\colon C(\widehat{X})\to C(\widehat{Y})$ is an isomorphism, and there exists a group isomorphism $\psi\colon G\to H$ such that $\widehat\varphi\circ\widehat\alpha_s = \widehat\beta_{\psi(s)}\circ\widehat\varphi$ for all $s\in G$.
        Consequently, $\varphi=\G_Y^{-1}\circ\widehat\varphi \circ \G_X$, which is the restriction of $\Phi$ to $\Linfty$, is an isomorphism between $\Linfty$ and $L^\infty(Y,\nu)$.
        It is directly verified that $\varphi\circ\alpha_s = \beta_{\psi(s)}\circ\varphi$ for all $s\in G$.  
    \end{proof}

    We obtain an analogous result to \autoref{CorEquiv} in the setting of measure-class-preserving actions.

    \begin{theorem}\label{mainThmMeasurable}
        Let $\actionof{G}{(X,\mu)}$ and $\actionof{H}{(Y,\nu)}$ be measure-class-preserving actions of discrete, countable groups $G$ and $H$ on standard measure spaces $(X,\mu)$ and $(Y,\nu)$, respectively. 
        Then \mbox{$\lone{G}{(X,\mu)}$} and $\lone{H}{(Y,\nu)}$ are isometrically isomorphic if and only if $\actionof{G}{(X,\mu)}$ and $\actionof{H}{(Y,\nu)}$ are measurably isomorphic.
    \end{theorem}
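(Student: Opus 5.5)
The plan is to prove the two implications separately, using \autoref{lemaMeasureIsom} and \autoref{measurableIsomOnAlg} for one direction and a direct construction for the other.

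For the forward implication, suppose $\Phi\colon\lone{G}{(X,\mu)}\to\lone{H}{(Y,\nu)}$ is an isometric isomorphism. Then \autoref{lemaMeasureIsom} supplies two things: the restriction $\varphi=\Phi|_{\Linfty}$ is an isomorphism onto $L^\infty(Y,\nu)$, and there is a group isomorphism $\psi\colon G\to H$ with $\varphi\circ\alpha_s=\beta_{\psi(s)}\circ\varphi$ for all $s\in G$. These are exactly the hypotheses of the converse direction of \autoref{measurableIsomOnAlg}. That lemma therefore produces the null invariant sets $M$ and $N$ and the equivariant invertible measure-class-preserving map $\theta\colon X\setminus M\to Y\setminus N$. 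Standardness of the spaces is needed only at this step, through \cite[Corollary I.38]{CrossedProductsDPWilliams}, which realizes $\varphi$ by a point map.

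For the reverse implication, suppose the actions are measurably isomorphic. By the first part of \autoref{measurableIsomOnAlg}, we get an isomorphism $\varphi\colon\Linfty\to L^\infty(Y,\nu)$, $f\mapsto f\circ\theta^{-1}$, together with $\psi$ satisfying $\varphi\circ\alpha_s=\beta_{\psi(s)}\circ\varphi$. We then define
\[\Phi\Big(\sum_{s\in G}f_s\delta_s\Big)=\sum_{s\in G}\varphi(f_s)\delta_{\psi(s)}.\]
The key observation is that $\varphi$ is a $*$-isomorphism of $C^*$-algebras, so it is isometric for $\|\cdot\|_\infty$. Hence $\Phi$ is isometric and bijective, and its inverse is built in the same way from $\varphi^{-1}$ and $\psi^{-1}$.

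It remains to check multiplicativity. By continuity and linearity, it suffices to check it on elementary tensors:
\[\Phi(f\delta_s\, g\delta_t)=\varphi(f)\,\varphi(\alpha_s(g))\,\delta_{\psi(st)}=\varphi(f)\,\beta_{\psi(s)}(\varphi(g))\,\delta_{\psi(s)\psi(t)}=\Phi(f\delta_s)\,\Phi(g\delta_t).\]
This is the same computation as in the converse part of \autoref{mainThm}, with trivial cocycle $\omega$.

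I expect the main obstacle to be measure-theoretic bookkeeping rather than anything algebraic, and it is already absorbed into \autoref{measurableIsomOnAlg}. The point is to ensure that the point map obtained from $\varphi$ can be made exactly equivariant after removing a $G$-invariant null set. Countability of $G$ is what makes the union of the exceptional sets, and its saturation, remain null. Since the earlier lemmas handle this, the proof should be a short combination of \autoref{lemaMeasureIsom} and \autoref{measurableIsomOnAlg} together with the explicit construction above.
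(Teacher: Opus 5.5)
Your proposal is correct and takes the same route as the paper. The forward direction combines \autoref{lemaMeasureIsom} with the converse part of \autoref{measurableIsomOnAlg}, and the reverse direction defines $\Phi(f\delta_s)=\varphi(f)\delta_{\psi(s)}$; you simply write out the isometry and multiplicativity checks that the paper calls routine.
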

    \begin{proof}
        The ``only if'' implication follows from \autoref{lemaMeasureIsom} and \autoref{measurableIsomOnAlg}.

        Conversely, suppose that $\actionof{G}{(X,\mu)}$ and $\actionof{H}{(Y,\nu)}$ are isomorphic measurable systems.
        By \autoref{measurableIsomOnAlg}, there are a group isomorphism $\psi\colon G\to H$ and an isomorphism $\varphi\colon\Linfty\to L^\infty(Y,\nu)$ such that $\varphi\circ\alpha_s = \beta_{\psi(s)}\circ\varphi$ for all $s\in G$.
        Let $\Phi\colon \lone{G}{(X,\mu)}\to\lone{H}{(Y,\nu)}$ be the map given by $\Phi(f\delta_s) = \varphi(f)\delta_{\psi(s)}$, for all $f\in\Linfty$ and $s\in G$. It is routine to check that $\Phi$ is an isometric isomorphism between the algebras.     
    \end{proof}

We need one more notion of equivalence for dynamical systems, this time taking measures into account.
     
    \begin{definition}\label{isomMeasureSyst}
        We say that two measure-preserving actions $G\curvearrowright^\sigma(X,\mu)$ and $H\curvearrowright^\tau(Y,\nu)$ are \textit{measure-isomorphic} if they are measurably isomorphic, and the map $\theta$ in \autoref{isomMeasurableSyst} can be chosen to be measure-preserving. 
    \end{definition}

    A \textit{standard probability space} is a probability space $(X,\mu)$ that is measurable isomorphic to a subset of a Polish space. 
    Every standard probability space is a standard measure space as in \autoref{stdMeasure}.
    If $(X,\mu)$ is a standard probability space, then $\mu$ induces a unital, contractive linear functional $I_\mu$ on $\Linfty$ given by
    $I_\mu(f)= \int_X f \ d\mu$ for all $f\in\Linfty$.
    An action $G\curvearrowright^\sigma(X,\mu)$ is said to be a \textit{probability-measure-preserving action} if, for all measurable subsets $E\subseteq X$, we have $\mu (E) = \mu(\sigma_s(E))$ for all $s\in G$. In this case, $I_\mu$ is $G$-invariant.

    \begin{lemma}\label{measureIsomOnAlg}
        Let $G\curvearrowright^\sigma (X,\mu)$ and $H\curvearrowright^\tau (Y,\nu)$ be probability-measure-preserving actions of countable, discrete groups $G$ and $H$ on standard probability spaces $(X,\mu)$ and $(Y,\nu)$, respectively, and let $G\curvearrowright^\alpha \Linfty$ and $H\curvearrowright^\beta L^\infty(Y,\nu)$ be the induced actions. 
        Then $G\curvearrowright^\sigma (X,\mu)$ and $H\curvearrowright^\tau(Y,\nu)$ are measure-isomorphic if and only if there are a group isomorphism $\psi\colon G\to H$ and an isomorphism $\varphi\colon\Linfty\to L^\infty(Y,\nu)$ such that $I_\nu\circ\varphi=I_\mu$ and $\varphi\circ\alpha_s = \beta_{\psi(s)}\circ\varphi$ for all $s\in G$.
    \end{lemma}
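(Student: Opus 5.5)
The plan is to follow the proof of \autoref{measurableIsomOnAlg} and keep track of the measures throughout.

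For the forward implication, suppose $\psi$, $M$, $N$ and a measure-preserving $\theta\colon X\setminus M\to Y\setminus N$ are given as in \autoref{isomMeasureSyst}. Define $\varphi(f)=f\circ\theta^{-1}$ as in the proof of \autoref{measurableIsomOnAlg}. That lemma already shows that $\varphi$ is an isomorphism satisfying $\varphi\circ\alpha_s=\beta_{\psi(s)}\circ\varphi$. It remains to check the integrals. Since $\mu(M)=0=\nu(N)$ and $\theta_*\mu=\nu$ on the complements, the change of variables formula gives
\[
I_\nu(\varphi(f))=\int_{Y\setminus N} f\circ\theta^{-1}\,d\nu=\int_{X\setminus M} f\,d\mu=I_\mu(f).
\]
This holds first for indicator functions, then for simple functions, and then for all $f\in\Linfty$ by uniform density of simple functions together with continuity of $I_\mu$ and $I_\nu$.

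For the converse, apply the converse direction of \autoref{measurableIsomOnAlg}. This yields invariant null sets $M,N$ and an invertible measure-class-preserving $\theta\colon X\setminus M\to Y\setminus N$ intertwining the actions. Moreover, as in that proof, $\theta$ is a restriction of the measurable isomorphism $\theta_0$ from \cite[Corollary I.38]{CrossedProductsDPWilliams}, which satisfies $\varphi(f)=f\circ\theta_0^{-1}$ almost everywhere. The remaining task is to show that $\theta$ is measure-preserving. Take a measurable $E\subseteq Y\setminus N$ and put $f=\mathbbm{1}_{\theta^{-1}(E)}\in\Linfty$. Then $\varphi(f)=\mathbbm{1}_E$ $\nu$-almost everywhere, so the hypothesis $I_\nu\circ\varphi=I_\mu$ gives $\nu(E)=\mu(\theta^{-1}(E))$. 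Hence $\theta$ is measure-preserving, and the actions are measure-isomorphic.

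I expect the main obstacle to be the bookkeeping in the converse. One must make sure that the identity $\varphi(f)=f\circ\theta^{-1}$ survives after shrinking to the invariant conull sets, and that $\theta^{-1}(E)$ is measurable. Both points hold because $\theta$ differs from $\theta_0$ only on null sets and has a measurable inverse. Since $\mu(M)=0$ and $\nu(N)=0$, the null sets do not affect any of the integrals.
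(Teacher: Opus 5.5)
Your proposal is correct and follows essentially the same route as the paper. In the forward direction you verify $I_\nu\circ\varphi=I_\mu$ on indicator functions and extend by density; in the converse you take $\theta$ to be the restriction of the map $\theta_0$ built in the proof of \autoref{measurableIsomOnAlg}, and test $I_\nu\circ\varphi=I_\mu$ on indicators to show $\theta$ preserves measure. The only difference is cosmetic: you test on sets $E\subseteq Y\setminus N$ via $\theta^{-1}(E)$, whereas the paper tests on $E\subseteq X\setminus M$ via $\theta(E)$, and these are equivalent since $\theta$ is a bimeasurable bijection.
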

    \begin{proof}
        Assume that $G\curvearrowright^\sigma (X,\mu)$ and $H\curvearrowright^\tau(Y,\nu)$ are measure-isomorphic. Choose a group isomorphism $\psi\colon G\to H$, subsets $M\subseteq X$ and $N\subseteq Y$, and an isomorphism $\theta\colon X\setminus M\to Y\setminus N$ as in \autoref{isomMeasureSyst}. 
        For all $y\not\in N$ we define $\varphi(f)(y) = (f\circ \theta^{-1})(y)$. 
        As $\nu(N) = 0$, this defines a map $\varphi\colon\Linfty\to L^\infty(Y,\nu)$, which is an isomorphism since $\mu(M) = 0$. 
        It is routine to check that $\varphi\circ\alpha_s = \beta_{\psi(s)}\circ\varphi$ for all $s\in G$. 
        For any measurable subset $E\subseteq X\setminus M$ we have $\nu(\theta(E)) = \mu(E)$, hence $I_\nu(\varphi(\mathbbm{1}_E)) = I_\mu(\mathbbm{1}_E)$. 
        As the linear span of the characteristic functions is dense, we conclude that $I_\nu\circ\varphi = I_\mu$. 

        For the converse, by \cite[Corollary I.38]{CrossedProductsDPWilliams} there are a $\mu$-null set $M_0\subseteq X$, a $\nu$-null set $N_0\subseteq Y$ and a measurable isomorphism $\theta_0\colon X\setminus M_0\to Y\setminus N_0$ such that $\varphi(f) = f\circ\theta_0^{-1}$. For $s\in G$, let $E_s$ be the set of points in $X$ where the equality $\theta_0\circ \sigma_s = \tau_{\psi(s)}\circ \theta_0$ fails, which is a $\mu$-null set since $\varphi\circ\alpha_s = \beta_{\psi(s)}\circ\varphi$. 
        It follows from $G$ being countable that the union of the $E_s$ is again a $\mu$-null set. 
        Hence, we can take $M$ as the $G$-invariant, $\mu$-null subset of $X$ that contains $M_0$ and the union of the $E_s$. 
        We take $N$ as the $H$-invariant, $\nu$-null subset of $Y$ such that $\theta_0$ restricts to a bijection $\theta\colon X\setminus M\to Y\setminus N$ where, for all $s\in G$, $\theta\circ\sigma_s = \tau_{\psi(s)}\circ \theta$ holds pointwise. 
        Finally, $\theta$ preserves measure, since for all measurable $E\subseteq X\setminus M$ we have that $\nu(\theta(E)) = I_\nu (\mathbbm{1}_{\theta(E)}) = I_\nu(\varphi(\mathbbm{1}_E)) = I_\mu(\mathbbm{1}_E) = \mu(E)$.
    \end{proof}

    Given a probability-measure-preserving action $\actionof{G}{(X,\mu)}$ on a standard probability space, there is a canonical way to extend $\mu$ to a trace in $\lone{G}{(X,\mu)}$. 
    \begin{definition}\label{canonicalTrace}
        Let $\actionof{G}{(X,\mu)}$ be a probability-measure-preserving action on a standard probability space $(X,\mu)$. 
        The \textit{canonical trace} $T_\mu$ on $\lone{G}{(X,\mu)}$ extending $\mu$ is the linear functional $T_\mu\colon\lone{G}{(X,\mu)}\to\C$ given by 
        $$T_\mu(f\delta_s) = 
        \begin{cases}
            \int_X fd\mu,  &s=e \\
            0, &s\neq e
        \end{cases}$$
for all $f\in C(X)$ and all $s\in G$.
    \end{definition}
    
    \begin{theorem}\label{thmIsomMeasure}
        Let $\actionof{G}{(X,\mu)}$ and $\actionof{H}{(Y,\nu)}$ be probability-measure-preserving actions of discrete, countable groups $G$ and $H$ on standard probability spaces $(X,\mu)$ and $(Y,\nu)$.  
        Then $\actionof{G}{(X,\mu)}$ and $\actionof{H}{(Y,\nu)}$ are measure-isomorphic if and only if \mbox{$\lone{G}{(X,\mu)}$} and $\lone{H}{(Y,\nu)}$ are isometrically isomorphic via an isometric isomorphism intertwining $T_\mu$ and $T_\nu$. 
    \end{theorem}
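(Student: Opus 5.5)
We prove the two implications separately, reducing each to \autoref{measureIsomOnAlg} together with the structural information from \autoref{lemaMeasureIsom} and \autoref{mainThmMeasurable}.

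\emph{Measure-isomorphic implies trace-intertwining isomorphism.} Suppose the actions are measure-isomorphic. By \autoref{measureIsomOnAlg} we obtain a group isomorphism $\psi\colon G\to H$ and an isomorphism $\varphi\colon\Linfty\to L^\infty(Y,\nu)$ with $I_\nu\circ\varphi=I_\mu$ and $\varphi\circ\alpha_s=\beta_{\psi(s)}\circ\varphi$. As in the proof of \autoref{mainThmMeasurable}, the map $\Phi(f\delta_s)=\varphi(f)\delta_{\psi(s)}$ is an isometric isomorphism. It remains to check $T_\nu\circ\Phi=T_\mu$ on the elements $f\delta_s$; the general case follows by linearity and $\ell^1$-continuity, since both traces are contractive.
\begin{itemize}
\item If $s\neq e$, then $\psi(s)\neq e$, so both sides vanish.
\item If $s=e$, then $T_\nu(\varphi(f)\delta_e)=I_\nu(\varphi(f))=I_\mu(f)=T_\mu(f\delta_e)$.
\end{itemize}

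\emph{Trace-intertwining isomorphism implies measure-isomorphic.} Let $\Phi$ be an isometric isomorphism with $T_\nu\circ\Phi=T_\mu$. By \autoref{lemaMeasureIsom}, $\varphi=\Phi|_{\Linfty}$ is an isomorphism onto $L^\infty(Y,\nu)$. Moreover, there is a group isomorphism $\psi$ with $\varphi\circ\alpha_s=\beta_{\psi(s)}\circ\varphi$. Since $\Phi$ maps $\Linfty=\Linfty\delta_e$ into $L^\infty(Y,\nu)\delta_e$, for $f\in\Linfty$ we get
\[
I_\nu(\varphi(f))=T_\nu(\Phi(f\delta_e))=T_\mu(f\delta_e)=I_\mu(f).
\]
Hence $I_\nu\circ\varphi=I_\mu$, and \autoref{measureIsomOnAlg} yields a measure-isomorphism.

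\emph{Expected obstacle.} The only delicate point is confirming that $\Phi$ really maps $\Linfty\delta_e$ into $L^\infty(Y,\nu)\delta_e$. This needs the fact that the statement of \autoref{lemaMeasureIsom} is about the restriction itself, not about a map merely conjugate to it. We justify it through the Gelfand transforms $\G_X,\G_Y$: the core argument of \autoref{coreOfl1}, applied inside \autoref{mainThm}, forces the isometric isomorphism $\widehat\Phi$ to send $C(\widehat X)$ onto $C(\widehat Y)$. Pulling back through the isometric isomorphisms $\Phi_X,\Phi_Y$, which respect the $\delta_e$-coefficients, gives $\Phi(\Linfty)=L^\infty(Y,\nu)$ as required.
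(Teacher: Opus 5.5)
Your proposal is correct and follows the paper's proof: in both directions you reduce to \autoref{measureIsomOnAlg}. For the forward implication you use $\Phi(f\delta_s)=\varphi(f)\delta_{\psi(s)}$, and for the converse you use \autoref{lemaMeasureIsom} together with $T_\nu\circ\Phi=T_\mu$ to get $I_\nu\circ\varphi=I_\mu$. Your extra remarks only make explicit what the paper leaves implicit: the traces are contractive, which justifies extending from the elements $f\delta_s$, and $\Phi$ maps $\Linfty$ onto $L^\infty(Y,\nu)$ because isometric isomorphisms preserve the core.
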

    \begin{proof}
        Assume that $\actionof{G}{(X,\mu)}$ and $\actionof{H}{(Y,\nu)}$ are measure-isomorphic. Using \autoref{measureIsomOnAlg} find a group isomorphism $\psi\colon G\to H$ and an isomorphism $\varphi\colon\Linfty\to L^\infty(Y,\nu)$ such that $I_\nu\circ\varphi = I_\mu$ and $\varphi\circ\alpha_s = \beta_{\psi(s)}\circ\varphi$ for all $s\in G$. 
        Define $\Phi\colon\lone{G}{(X,\mu)}\to\lone{H}{(Y,\nu)}$ by 
        $$\Phi(f\delta_s) = \varphi(f) \delta_{\psi(s)},$$
        for all $f\in\Linfty$ and $s\in G$. 
        It is routine to check that $\Phi$ is an isometric isomorphism. As $I_\nu\circ\varphi = I_\mu$, it is clear that $T_\nu\circ\Phi = T_\mu$.

        For the converse, let $G\curvearrowright^\alpha\Linfty$ and $H\curvearrowright^\beta L^\infty(Y,\nu)$ denote the induced actions. 
        If $\Phi\colon\lone{G}{(X,\mu)}\to\lone{H}{(Y,\nu)}$ is an isometric isomorphism, it follows from \autoref{lemaMeasureIsom} that $\varphi = \Phi|_{\Linfty}$ is an isomorphism from $\Linfty$ to $L^\infty(Y,\nu)$ and that there exists a group isomorphism $\psi\colon G\to H$ such that $\varphi\circ\alpha_s = \beta_{\psi(s)} \circ\varphi$ for all $s\in G$.
        Since $T_\nu\circ\Phi = T_\mu$ we have that $\nu\circ\varphi = \mu$.
        Thus the result follows from \autoref{measureIsomOnAlg}.
    \end{proof}

\end{document}